\newtheorem{defi}{Definition}[section]
\newtheorem{lem}[defi]{Lemma}
\newtheorem{prop}[defi]{Proposition}
\newtheorem{thm}[defi]{Theorem}
\newtheorem{rem}[defi]{Remark}
\newtheorem{cor}[defi]{Corollary}
\title{\LARGE \bf
Risk-Aware Stability, Ultimate Boundedness, and Positive Invariance
}
\author{Masako Kishida
\thanks{M. Kishida is with the National Institute of Informatics, Tokyo, Japan 
        {\tt\small kishida@nii.ac.jp}}%
}
\begin{document}

\maketitle
\thispagestyle{empty}
\pagestyle{empty}

\begin{abstract}
This paper introduces the notions of stability, ultimate boundedness, and positive invariance for stochastic systems in the view of risk.
More specifically, those notions are defined in terms of the worst-case Conditional Value-at-Risk (CVaR),
which quantifies the worst-case conditional expectation of losses exceeding a certain threshold over a set of possible uncertainties. 
Those notions allow us to focus our attention on the tail behavior of stochastic systems in the analysis of dynamical systems and the design of controllers.
Furthermore, some event-triggered control strategies that guarantee ultimate boundedness and positive invariance with specified bounds are derived using the obtained results and illustrated using numerical examples.

\end{abstract}

\section{Introduction}\label{sec:intro}
Whether it is a financial portfolio or engineering system, we often need to accept a certain level of risk to balance the pros and cons of spending costs in the decision-making processes under uncertainties.
This is particularly true when the uncertainty distributions have an unbounded support because a guarantee of 100\% ideal satisfaction is impossible or requires an infinite amount of cost.
Therefore, risk has been studied for a long time, not only in the financial industry \cite{Mar68, KleB76, MarT00, ZhuF09}, but also in the wide areas of engineering \cite{Cor68, Whi81, Whi84} using many different approaches.

The notions of stability, ultimate boundedness, and positive invariance are fundamental in the analysis of dynamical systems and in the design of controllers \cite{Bla99}. 
To deal with stochastic uncertainties in dynamical systems, the concept of probabilistic stability was introduced in \cite{Kus65}.
Later, the concepts of probabilistic set invariance and ultimate boundedness were introduced for discrete-time linear systems in \cite{KofDS12} and extended to continuous-time linear systems in \cite{KofDS16}. Those probabilistic notions are defined using chance constraints.
Another popular approach to dealing with stochastic uncertainties is to consider mean square and $p$-stability \cite{KanL01}.
In this direction, ultimate boundedness \cite{Zak67} and positive invariance \cite{BenBD02,BouB02} have been also investigated.
However, those notions are not applicable to guarantee that the expected value of the constraint-violating cases is small.

To take into account this additional consideration of risk, 
this paper introduces the definitions of stability, ultimate boundedness, and positive invariance in terms of the worst-case Conditional Value-at-Risk (CVaR) for stochastic systems. 
CVaR is a relatively new risk measure that is defined as the conditional expectation of losses exceeding a certain threshold \cite{RocU00}. 
The worst-case CVaR is the supremum of CVaR over a set of possible disturbances \cite{ZhuF09, HuaZFF08}. 
Using the (worst-case) CVaR, the tail behavior of the stochastic systems can be quantified; it quantifies the risk which has a low probability of occurring, but if it does occur, it will result in a large loss.
Assessing the tail risk is especially beneficial when the uncertainty distribution has a fat tail. In such a case, the use of the chance constraints or standard expected value for analysis may result in a significant loss.
Because the (worst-case) CVaR is a coherent risk measure \cite{ZhuF09}, it enjoys nice mathematical properties.
In particular, we see the worst-case CVaR on the squared norm of the states using the first two moments of the disturbance allows 
us to obtain elegant results for discrete-time linear stochastic systems. 
Such a problem setup can be easily applied to cases wherein the disturbance is non-Gaussian or the probability density function is not available.

The rest of the paper is organized as follows. 
After introducing notation, definitions, and properties of the worst-case CVaR as well as some basic results in
Section \ref{sec:pre}, 
Section \ref{sec:stab} and Section \ref{sec:rob} present the definitions of stability, ultimate boundedness, and positive invariance using the worst-case CVaR,
without inputs and with bounded inputs, respectively.
Based on the results in Section \ref{sec:rob}, approaches to risk-aware event-triggered control are developed in Section \ref{sec:event}, which is followed by conclusion in Section  \ref{sec:conc}.
\section{Preliminaries}\label{sec:pre}
\subsection{Notation}
The sets of real numbers, real vectors of length $n$, and real matrices
of size $n \times m$ are denoted by $\mathbb{R}$, $\mathbb{R}^n$,  and $\mathbb{R}^{n\times m}$, respectively. 
The sets of nonnegative numbers, nonnegative integers, and positive integers are denoted by $\mathbb{R}_{\geq0}$, $\mathbb{Z}_{\geq 0}$ and $\mathbb{Z}_{> 0}$, respectively.
For $M\in \mathbb{R}^{n\times n}$, $M \succ 0$ indicates $M$ is positive definite. 
$M^\top$ denotes the transpose of a real matrix $M$ and $\text{Tr}(M)$ denotes the trace of $M$.
$I_n$ denotes the identity matrix of size $n$. 
The Kronecker product of two matrices $X$ and $Y$ is denoted as $X\otimes Y$.
For $x\in\mathbb{R}$, $x^+ = \max\{x, 0\}$. 
For a vector $v\in \mathbb{R}^n$, $\|v\|$ denotes the Euclidean norm. 
For a matrix $M$, $\|M\|$ denotes the maximum singular value norm. 
Recall that a function $\gamma: \mathbb{R}_{\geq0} \rightarrow  \mathbb{R}_{\geq0} $ is a $\mathcal{K}$ function if it is continuous, strictly increasing and $\gamma(0)=0$.
A function $\beta: \mathbb{R}_{\geq0} \times  \mathbb{R}_{\geq0} \rightarrow  \mathbb{R}_{\geq0} $ is a $\mathcal{KL}$ function if for each fixed $t \geq 0$, $\beta(s,t)\in\mathcal{K}$  with respect to $s$  and for each fixed $s\geq 0$, $\beta(s,t)$  is decreasing with respect to $t$ and $ \lim_{t\rightarrow\infty} \beta(s,t)=0$.
\subsection{Conditional Value-at-Risk}
Let $\mu \in\mathbb{R}^n$ be the mean and $\Sigma \in\mathbb{R}^{n\times n}$ be the covariance matrix of the random vector  $\xi \in\mathbb{R}^n$ under the true distribution $\mathbb{P}$, which is the probability law of $\xi$. 
Thus, it is implicitly assumed that the random vector $\xi$ has finite second-order moments.
Let $\mathcal{P}$ denote the set of all probability distributions on $\mathbb{ R}^n$ that have the same first- and second-order moments as $\mathbb{P}$, i.e.,
\begin{align*} 
 \mathcal{P}= \left\{\mathbb{P} : \mathbb{E}_{\mathbb{P}}\left[ \begin{bmatrix}\xi_i\\ 1\end{bmatrix}\begin{bmatrix}\xi_j\\ 1\end{bmatrix}^{\!\top} \right]
=  \begin{bmatrix}  \Sigma  \delta_{ij}  & 0 \\ 0^\top & 1 \end{bmatrix}, \forall i, j\right\}.
\end{align*}
Here $\delta_{ij}$ denotes the Kronecker delta and $\mathbb{E}_{\mathbb{P}}[ \cdot]$ denotes the expectation with respect to $\mathbb{P}$.
The true underlying probability measure $\mathbb{P}$ is not known exactly, but it is known that  $\mathbb{P}\in \mathcal{P}$. 

\begin{defi}[Conditional Value-at-Risk \cite{RocU00,ZymKR13-b}]
For a given measurable loss function $L : \mathbb{R}^n \rightarrow  \mathbb{R}$, a probability distribution $\mathbb{P}$ on $\mathbb{R}^n$ and a level 
 $\varepsilon \in (0, 1)$, the CVaR at $\varepsilon$ with respect to $\mathbb{P}$ is defined as
 \begin{align*}
\mathbb{P}\text{-CVaR}_{\varepsilon}[L({\xi})] =\inf_{\beta \in \mathbb{R}} \left\{ \beta + \frac{1}{\varepsilon}\mathbb{E}_{\mathbb{P}}[(L({\xi})-\beta)^+]\right\}.
\end{align*}
\end{defi}
CVaR is the conditional expectation of loss above the ($1-\varepsilon$)-quantile of the loss function \cite{ZymKR13-b} and quantifies the tail risk (see Figure \ref{fig:cvar}).

The worst-case CVaR  is the supremum of CVaR over a given set of probability distributions as defined below: 
\begin{defi}[Worst-case CVaR \cite{ZymKR13-b}]
The worst-case CVaR over $\mathcal{P}$ is given by
 \begin{align*}
\sup_{\mathbb{P}\in \mathcal{P}}\mathbb{P}\text{-CVaR}_{\varepsilon}[L({\xi})]
=\inf_{\beta \in \mathbb{R}} \left\{ \beta + \frac{1}{\varepsilon}\sup_{\mathbb{P}\in \mathcal{P}}\mathbb{E}_{\mathbb{P}}[(L({\xi})-\beta)^+]\right\}.
\end{align*}
\end{defi}
Here, the exchange between the supremum and infimum is justified by the stochastic saddle point theorem \cite{ShaK02}.

\begin{figure}[b]\centering
 \includegraphics[width=.98\linewidth, viewport =10 10 570 280, clip]{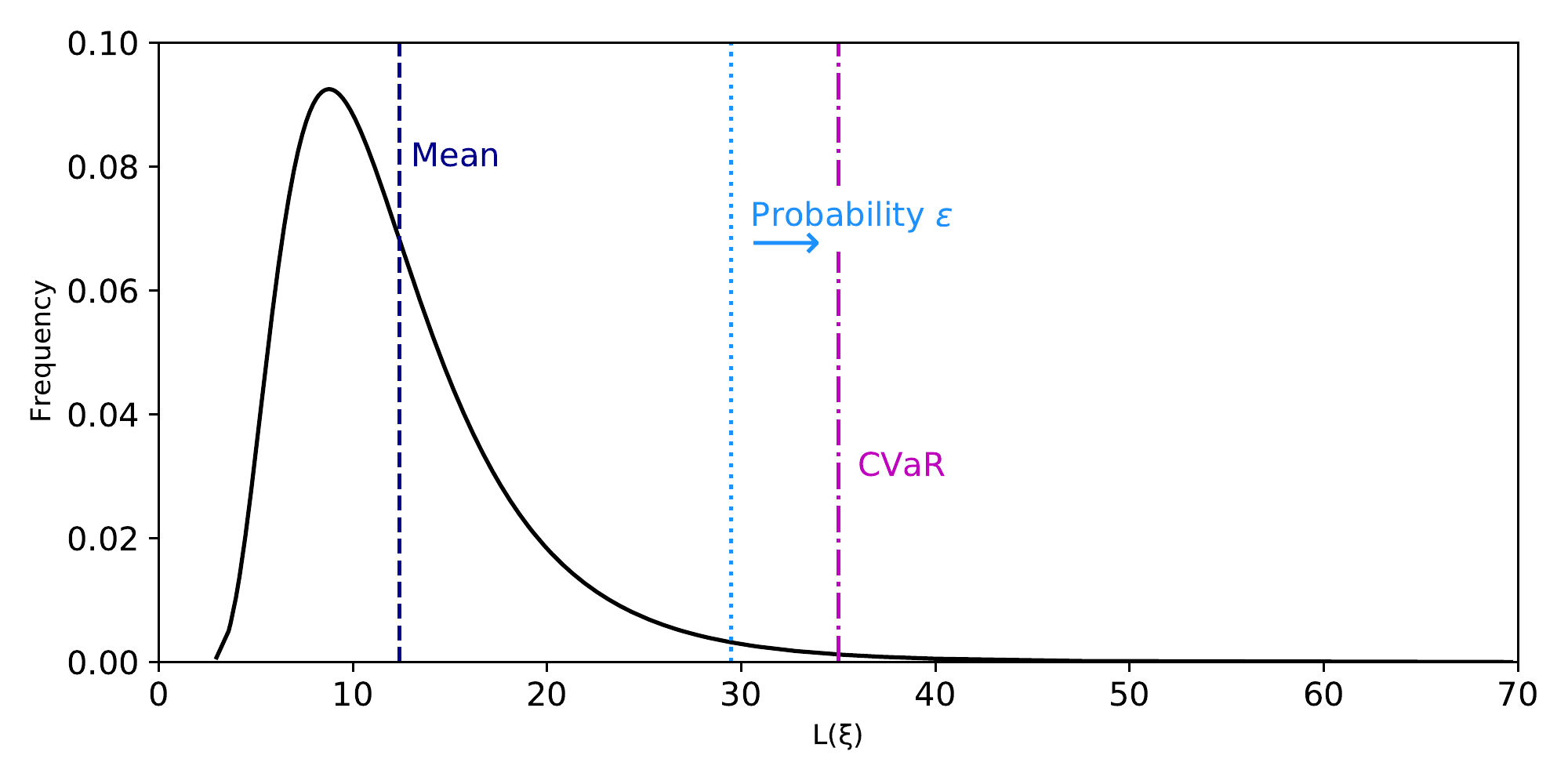}
\caption{Illustration of mean and CVaR} 
\label{fig:cvar}
\end{figure}

If $L(\xi)$ is quadratic with respect to $\xi$, the worst-case CVaR can be computed by a semidefinite program \cite{ZymKR13-b}, \cite{ZymKR13}.
Furthermore, if the mean of the random vector  $\xi$ is zero, the following easy-to-compute bounds are obtained. 
\begin{lem}[Bounds for worst-case CVaR \cite{KisC23}]\label{lem:CVaR_zero_mean_bound}
Suppose $\mu=0$ and  $L(\xi)= \| A\xi + b \|^2+c$ with some $A \in \mathbb{R}^{m \times n}$, $b\in \mathbb{R}^n$ and $c\in \mathbb{R}$, then
 \begin{align*}
c  + b^\top b + \frac{1}{\varepsilon}\left(\text{Tr}(\Sigma A^\top A)\right) \leq
 &\ \sup_{\mathbb{P}\in \mathcal{P}}\mathbb{P}\text{-CVaR}_{\varepsilon}[L({\xi})] \\
 \leq &\  c + \frac{1}{\varepsilon}\left(\text{Tr}(\Sigma A^\top A)+b^\top b\right).
 \end{align*}
 If $b=0$ and $c=0$, then it follows that
  \begin{align*}
\sup_{\mathbb{P}\in \mathcal{P}}\mathbb{P}\text{-CVaR}_{\varepsilon}[L({\xi})] 
= \frac{1}{\varepsilon}\text{Tr}(\Sigma A^\top A).
 \end{align*}
\end{lem}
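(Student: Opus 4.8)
The plan is to work from the dual (inf–sup) representation of the worst-case CVaR stated above, exploiting that every $\mathbb{P}\in\mathcal{P}$ shares the same first two moments. Since $\mu=0$ gives $\mathbb{E}_{\mathbb{P}}[\xi]=0$ and $\mathbb{E}_{\mathbb{P}}[\xi\xi^\top]=\Sigma$, the quantity $\mathbb{E}_{\mathbb{P}}[\|A\xi+b\|^2]=\text{Tr}(\Sigma A^\top A)+b^\top b$ is a constant independent of $\mathbb{P}$, because the cross term $2b^\top A\,\mathbb{E}_{\mathbb{P}}[\xi]$ vanishes. I would first dispose of the upper bound: as $\|A\xi+b\|^2\ge 0$, evaluating the infimand at the single choice $\beta=c$ makes the positive part redundant, so $\sup_{\mathbb{P}}\mathbb{P}\text{-CVaR}_{\varepsilon}[L]\le c+\frac{1}{\varepsilon}\sup_{\mathbb{P}}\mathbb{E}_{\mathbb{P}}[\|A\xi+b\|^2]=c+\frac{1}{\varepsilon}(\text{Tr}(\Sigma A^\top A)+b^\top b)$.

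For the lower bound I would exhibit one distribution $\mathbb{P}_0\in\mathcal{P}$ whose CVaR already meets the bound, using that the supremum dominates any single $\mathbb{P}_0\text{-CVaR}$. The natural candidate concentrates all of the second-moment mass into a rare event: let $\mathbb{P}_0$ place probability $1-\varepsilon$ on $\xi=0$ and probability $\varepsilon$ on a random vector $\eta$ with mean $0$ and covariance $\frac{1}{\varepsilon}\Sigma$. Then $\mathbb{P}_0\in\mathcal{P}$, since its mean is $0$ and its second moment is $\varepsilon\cdot\frac{1}{\varepsilon}\Sigma+(1-\varepsilon)\cdot 0=\Sigma$. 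Splitting the expectation over the two branches, bounding the positive part on the $\eta$-branch below via $(x)^+\ge x$, and discarding the nonnegative $\xi=0$ branch, I expect to obtain, for every $\beta$, $\mathbb{E}_{\mathbb{P}_0}[(L(\xi)-\beta)^+]\ge \text{Tr}(\Sigma A^\top A)+\varepsilon(b^\top b+c-\beta)$; here the inflation of the covariance by $\frac{1}{\varepsilon}$ is exactly what restores the full $\text{Tr}(\Sigma A^\top A)$ term after multiplication by $\varepsilon$. Substituting into $\beta+\frac{1}{\varepsilon}\mathbb{E}_{\mathbb{P}_0}[(L-\beta)^+]$ cancels the $\beta$-dependence and leaves the constant $c+b^\top b+\frac{1}{\varepsilon}\text{Tr}(\Sigma A^\top A)$ uniformly in $\beta$, so the infimum over $\beta$ gives $\mathbb{P}_0\text{-CVaR}_{\varepsilon}[L]\ge c+b^\top b+\frac{1}{\varepsilon}\text{Tr}(\Sigma A^\top A)$, and hence the same lower bound for the supremum.

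The two bounds coincide precisely when $b=0$ and $c=0$, both collapsing to $\frac{1}{\varepsilon}\text{Tr}(\Sigma A^\top A)$, which yields the stated equality as an immediate corollary. I expect the only real obstacle to be the lower bound, and specifically the choice of the worst-case distribution $\mathbb{P}_0$: the entire $\frac{1}{\varepsilon}$ enhancement over the ordinary mean $\text{Tr}(\Sigma A^\top A)+b^\top b$ hinges on loading the covariance into an event of probability exactly $\varepsilon$ while keeping the mean at the origin, so that the CVaR sees a loss inflated by $1/\varepsilon$. Verifying $\mathbb{P}_0\in\mathcal{P}$ and confirming that applying $(x)^+\ge x$ only on the rare branch still delivers the target constant are the details I would check most carefully; the upper bound and the final specialization are routine.
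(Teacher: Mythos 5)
Your proof is correct. Note that the paper itself contains no proof of this lemma---it is imported from \cite{KisC23}---and the route in the literature (which the surrounding text gestures at: ``if $L(\xi)$ is quadratic\ldots the worst-case CVaR can be computed by a semidefinite program'') is via the exact SDP reformulation of worst-case CVaR for quadratic losses due to Zymler et al., from which the two bounds follow by manipulating the matrix inequality constraints. Your argument is genuinely different and entirely primal/elementary. For the upper bound you evaluate the infimand at the single point $\beta=c$, where the positive part is inactive since $\|A\xi+b\|^2\geq 0$, and you correctly use that $\mathbb{E}_{\mathbb{P}}[\|A\xi+b\|^2]=\operatorname{Tr}(\Sigma A^\top A)+b^\top b$ is the \emph{same constant} for every $\mathbb{P}\in\mathcal{P}$ because the cross term $2b^\top A\,\mathbb{E}_{\mathbb{P}}[\xi]$ vanishes under $\mu=0$; this works pointwise in $\mathbb{P}$, so you do not even need the saddle-point exchange for this half. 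For the lower bound, your mixture $\mathbb{P}_0$ (mass $1-\varepsilon$ at the origin, mass $\varepsilon$ on a zero-mean vector with covariance $\tfrac{1}{\varepsilon}\Sigma$) does lie in $\mathcal{P}$, and your computation checks out: discarding the nonnegative atom branch and applying $(x)^+\geq x$ on the rare branch gives $\mathbb{E}_{\mathbb{P}_0}[(L-\beta)^+]\geq \operatorname{Tr}(\Sigma A^\top A)+\varepsilon(b^\top b+c-\beta)$, whence $\beta+\tfrac{1}{\varepsilon}\mathbb{E}_{\mathbb{P}_0}[(L-\beta)^+]\geq c+b^\top b+\tfrac{1}{\varepsilon}\operatorname{Tr}(\Sigma A^\top A)$ uniformly in $\beta$, so the infimum---and hence the supremum over $\mathcal{P}$---obeys the bound; the equality case $b=0$, $c=0$ is then immediate since the two bounds coincide. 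What each approach buys: yours is self-contained, avoids SDP duality, and makes the risk interpretable by exhibiting an explicit near-extremal distribution (covariance loaded onto an event of probability exactly $\varepsilon$, which is precisely where the $1/\varepsilon$ inflation comes from); the SDP route yields the exact worst-case value for general (possibly indefinite, nonzero-mean) quadratics rather than two-sided bounds. One cosmetic point you silently inherited from the statement: for $A\xi+b$ to parse with $A\in\mathbb{R}^{m\times n}$, one needs $b\in\mathbb{R}^m$ rather than $b\in\mathbb{R}^n$, as your own cross term $b^\top A\,\mathbb{E}[\xi]$ implicitly assumes.
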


To deal with dynamical systems, the following is a simple, but useful result.
\begin{lem}\label{lem:aug}
Suppose $\xi_1, \xi_2, ..., \xi_m \in \mathbb{R}^n$ are independent and identically distributed random vectors under the true distribution $\mathbb{P}\in \mathcal{P}$.
Let $\bar{\xi} = [\xi_1^\top, \xi_2^\top, ..., \xi_m^\top]^\top$.
Then, $\bar{\xi} \in \mathbb{R}^{nm}$ is a random vector with the mean zero and covariance $I_m\otimes \Sigma$. 
Thus, the true underlying probability measure $\mathbb{P}$ for $\xi$ satisfies $\mathbb{P}\in \mathcal{P}_{\text{aug}}$, where
\begin{align*}
 \mathcal{P}_{\text{aug}}=  \left\{\mathbb{P}\! :\! \mathbb{E}_{\mathbb{P}}\left[ \begin{bmatrix}\bar{\xi}_{i}\\ 1\end{bmatrix}\begin{bmatrix}\bar{\xi}_j\\ 1\end{bmatrix}^{\!\top} \right]
= \begin{bmatrix} \! (I_m\otimes \Sigma) \delta_{ij} & 0\\ 0 & 1 \end{bmatrix}, \forall i, j\right\}.
\end{align*}
Moreover, for a matrix $A \in \mathbb{R}^{\ell \times nm}$, it holds that
  \begin{align*}
\sup_{\mathbb{P}\in \mathcal{P}_{\text{aug}}}\mathbb{P}\text{-CVaR}_{\varepsilon}[\|A\xi\|^2] 
= \frac{1}{\varepsilon}\text{Tr}((I_m\otimes \Sigma)A^\top A).
 \end{align*}
\end{lem}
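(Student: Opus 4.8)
The plan is to verify that the stacked vector $\bar{\xi}$ has exactly the first two moments recorded in $\mathcal{P}_{\text{aug}}$, and then to invoke Lemma \ref{lem:CVaR_zero_mean_bound} in its equality form (the case $b=0$, $c=0$) with the covariance $\Sigma$ there replaced by $I_m\otimes\Sigma$. Thus the argument splits cleanly into a moment computation for $\bar{\xi}$, followed by a direct citation.

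First I would compute the mean. Since each $\xi_p$ has mean zero, stacking them yields $\mathbb{E}_{\mathbb{P}}[\bar{\xi}]=0$. Next I would compute the covariance $\mathbb{E}_{\mathbb{P}}[\bar{\xi}\bar{\xi}^\top]$ block by block: its $(p,q)$ block equals $\mathbb{E}_{\mathbb{P}}[\xi_p\xi_q^\top]$. For $p\neq q$, independence gives $\mathbb{E}_{\mathbb{P}}[\xi_p\xi_q^\top]=\mathbb{E}_{\mathbb{P}}[\xi_p]\,\mathbb{E}_{\mathbb{P}}[\xi_q]^\top=0$, while for $p=q$ the identical-distribution assumption gives $\mathbb{E}_{\mathbb{P}}[\xi_p\xi_p^\top]=\Sigma$. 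Hence the covariance is block diagonal with $\Sigma$ in every diagonal block, which is precisely $I_m\otimes\Sigma$. Reading off the entries of the corresponding augmented second-moment matrix shows that the defining moment constraints of $\mathcal{P}_{\text{aug}}$ are met, so $\mathbb{P}\in\mathcal{P}_{\text{aug}}$.

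With the moments established, I would apply Lemma \ref{lem:CVaR_zero_mean_bound} to the augmented vector $\bar{\xi}$, taking the loss $L(\bar{\xi})=\|A\bar{\xi}\|^2$ so that $b=0$ and $c=0$, and using $I_m\otimes\Sigma$ as the covariance. The equality branch of that lemma then delivers $\sup_{\mathbb{P}\in\mathcal{P}_{\text{aug}}}\mathbb{P}\text{-CVaR}_{\varepsilon}[\|A\bar{\xi}\|^2]=\tfrac{1}{\varepsilon}\text{Tr}((I_m\otimes\Sigma)A^\top A)$, as claimed.

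The computation itself is routine; the only point that deserves care is the applicability of Lemma \ref{lem:CVaR_zero_mean_bound}. That lemma characterizes the worst-case CVaR over the whole moment-matched ambiguity set, and $\mathcal{P}_{\text{aug}}$ is exactly the set of distributions on $\mathbb{R}^{nm}$ with mean zero and covariance $I_m\otimes\Sigma$, not merely the product distributions generated by i.i.d.\ stacking. I would therefore emphasize that the supremum on the left is taken over this full set, so that the moment data $(0,\,I_m\otimes\Sigma)$ are the only inputs needed and the equality transfers verbatim. No genuine obstacle arises beyond this bookkeeping.
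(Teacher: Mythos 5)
Your proof is correct and takes essentially the same route as the paper, which likewise deduces the zero mean and block-diagonal covariance $I_m\otimes\Sigma$ directly from the i.i.d.\ assumption and then cites the equality case ($b=0$, $c=0$) of Lemma~\ref{lem:CVaR_zero_mean_bound} applied to $\bar{\xi}$. Your closing remark that $\mathcal{P}_{\text{aug}}$ is the full moment-matched ambiguity set on $\mathbb{R}^{nm}$, not merely the product distributions arising from i.i.d.\ stacking, is a worthwhile clarification that the paper's one-line proof leaves implicit.
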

\begin{proof}
Follows from the assumption that $\xi_1, \xi_2, ..., \xi_m \in \mathbb{R}^n$ are independent and identically distributed.
The second part follows from Lemma \ref{lem:CVaR_zero_mean_bound}.
\end{proof}
One reason that the (worst-case) CVaR is popular for risk assessment is its mathematically attractive properties of coherency.
\begin{prop}[Coherence properties \cite{ZhuF09,Art99}]\label{prop:pro}
The worst-case CVaR is a coherent risk measure, i.e., it satisfies the following properties:
Let $L_1 = L_1(\xi)$ and $L_2 = L_2(\xi)$ be two measurable loss functions.
\begin{itemize}
\item Sub-additivity: For all $L_1$ and $L_2$,
\begin{align*}
&\sup_{\mathbb{P}\in \mathcal{P}}\mathbb{P}\text{-CVaR}_{\varepsilon}[L_1+L_2]\\
 & \leq \sup_{\mathbb{P}\in \mathcal{P}}\mathbb{P}\text{-CVaR}_{\varepsilon}[L_1] +
\sup_{\mathbb{P}\in \mathcal{P}}\mathbb{P}\text{-CVaR}_{\varepsilon}[L_2]; 
\end{align*}
\item Positive homogeneity: For a positive constant $a>0$, 
\begin{align*}
\sup_{\mathbb{P}\in \mathcal{P}}\mathbb{P}\text{-CVaR}_{\varepsilon}[aL_1] =a\sup_{\mathbb{P}\in \mathcal{P}}\mathbb{P}\text{-CVaR}_{\varepsilon}[L_1]; 
\end{align*}
\item Monotonicity: If $L_1\leq L_2$ almost surely, 
\begin{align*}
\sup_{\mathbb{P}\in \mathcal{P}}\mathbb{P}\text{-CVaR}_{\varepsilon}[L_1] \leq
\sup_{\mathbb{P}\in \mathcal{P}}\mathbb{P}\text{-CVaR}_{\varepsilon}[L_2];
\end{align*}
\item Translation invariance: For a constant $c$,
\begin{align*}
\sup_{\mathbb{P}\in \mathcal{P}}\mathbb{P}\text{-CVaR}_{\varepsilon}[L_1+c] = \sup_{\mathbb{P}\in \mathcal{P}}\mathbb{P}\text{-CVaR}_{\varepsilon}[L_1] +
c.
\end{align*}
\end{itemize}
\end{prop}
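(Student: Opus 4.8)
The plan is to reduce the claim to two facts: first, that for each fixed $\mathbb{P}$ the map $L\mapsto\mathbb{P}\text{-CVaR}_{\varepsilon}[L]$ is itself coherent; and second, that a pointwise supremum of coherent risk measures over an arbitrary index set is again coherent. Writing $\rho_{\mathbb{P}}(L)=\inf_{\beta\in\mathbb{R}}\phi_{\mathbb{P}}(L,\beta)$ with $\phi_{\mathbb{P}}(L,\beta)=\beta+\tfrac{1}{\varepsilon}\mathbb{E}_{\mathbb{P}}[(L-\beta)^+]$, the worst-case CVaR is $\sup_{\mathbb{P}\in\mathcal{P}}\rho_{\mathbb{P}}(L)$, so I would first establish each of the four properties at the level of $\rho_{\mathbb{P}}$ using the Rockafellar--Uryasev representation, and then transport them through the supremum.

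For a fixed $\mathbb{P}$ three of the four properties follow from elementary substitutions in $\phi_{\mathbb{P}}$. Translation invariance comes from $\beta\mapsto\beta-c$, which gives $\phi_{\mathbb{P}}(L+c,\beta)=\phi_{\mathbb{P}}(L,\beta-c)+c$ and hence $\rho_{\mathbb{P}}(L+c)=\rho_{\mathbb{P}}(L)+c$. Positive homogeneity comes from the reparametrization $\beta\mapsto a\beta$ together with $(at)^+=at^+$ for $a>0$, which yields $\phi_{\mathbb{P}}(aL,a\beta)=a\,\phi_{\mathbb{P}}(L,\beta)$ and therefore $\rho_{\mathbb{P}}(aL)=a\,\rho_{\mathbb{P}}(L)$. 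Monotonicity is immediate because $t\mapsto t^+$ is nondecreasing, so $L_1\le L_2$ almost surely gives $\phi_{\mathbb{P}}(L_1,\beta)\le\phi_{\mathbb{P}}(L_2,\beta)$ for every $\beta$, and taking infima preserves the inequality.

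The one property that needs a genuine argument is sub-additivity, which I expect to be the main obstacle to state cleanly. Here I would use that $t\mapsto t^+$ is both nondecreasing and sub-additive, so that $(L_1+L_2-\beta_1-\beta_2)^+\le (L_1-\beta_1)^+ +(L_2-\beta_2)^+$ pointwise for any scalars $\beta_1,\beta_2$. Taking expectations and specializing the single free parameter of $\phi_{\mathbb{P}}(L_1+L_2,\cdot)$ to $\beta_1+\beta_2$ gives $\phi_{\mathbb{P}}(L_1+L_2,\beta_1+\beta_2)\le\phi_{\mathbb{P}}(L_1,\beta_1)+\phi_{\mathbb{P}}(L_2,\beta_2)$. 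Since $\beta_1$ and $\beta_2$ range freely, taking the infimum on the right over each separately yields $\rho_{\mathbb{P}}(L_1+L_2)\le\rho_{\mathbb{P}}(L_1)+\rho_{\mathbb{P}}(L_2)$; the decoupling of the joint minimizer into two independent ones is the crux of this step.

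Finally I would lift each property to the worst-case functional. Positive homogeneity and translation invariance pass through the supremum immediately, via $\sup_{\mathbb{P}}(a\,\rho_{\mathbb{P}})=a\sup_{\mathbb{P}}\rho_{\mathbb{P}}$ for $a>0$ and $\sup_{\mathbb{P}}(\rho_{\mathbb{P}}+c)=\sup_{\mathbb{P}}\rho_{\mathbb{P}}+c$, and monotonicity passes through since an inequality valid for every $\mathbb{P}$ survives the supremum. Sub-additivity combines the single-measure bound with the elementary inequality $\sup_{\mathbb{P}}(f(\mathbb{P})+g(\mathbb{P}))\le\sup_{\mathbb{P}}f(\mathbb{P})+\sup_{\mathbb{P}}g(\mathbb{P})$, chaining $\sup_{\mathbb{P}}\rho_{\mathbb{P}}(L_1+L_2)\le\sup_{\mathbb{P}}(\rho_{\mathbb{P}}(L_1)+\rho_{\mathbb{P}}(L_2))\le\sup_{\mathbb{P}}\rho_{\mathbb{P}}(L_1)+\sup_{\mathbb{P}}\rho_{\mathbb{P}}(L_2)$. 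The only subtlety requiring care is the reading of ``almost surely'' in the monotonicity hypothesis: because $\mathcal{P}$ contains distributions with differing supports, I would interpret $L_1\le L_2$ as holding $\mathbb{P}$-a.s. for every $\mathbb{P}\in\mathcal{P}$, which is exactly what is needed so that $\rho_{\mathbb{P}}(L_1)\le\rho_{\mathbb{P}}(L_2)$ holds uniformly before the supremum is taken.
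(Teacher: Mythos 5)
Your proposal is correct, and there is a structural point worth noting: the paper offers no proof of this proposition at all --- it is stated as a known result with citations to the references on coherent risk measures and worst-case CVaR --- so your argument is a self-contained reconstruction rather than an alternative to an internal proof. Your two-layer decomposition (coherence of each fixed-measure CVaR $\rho_{\mathbb{P}}$ via the Rockafellar--Uryasev representation, then closure of all four properties under pointwise suprema over $\mathcal{P}$) is exactly the standard route the cited literature takes, and each step checks out: the substitutions $\beta\mapsto\beta-c$ and $\beta\mapsto a\beta$ are bijections of $\mathbb{R}$, so the infima transform as you claim; the sub-additivity step correctly exploits $(x+y)^+\leq x^+ + y^+$ together with the restriction of the single free parameter to the diagonal $\beta=\beta_1+\beta_2$, after which the infimum over $(\beta_1,\beta_2)$ decouples because the right-hand side is separable --- this is indeed the only step with genuine content, and you identify it as such. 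One pleasant byproduct of your approach: by working directly with $\sup_{\mathbb{P}}\rho_{\mathbb{P}}$ you never need the sup--inf exchange that the paper invokes via the stochastic saddle point theorem when defining the worst-case CVaR; the minimax identity is irrelevant to coherence. Your closing caveat about the meaning of ``almost surely'' is also well taken and not pedantic: since $\mathcal{P}$ is a moment-based ambiguity set containing mutually singular measures, $L_1\leq L_2$ must hold $\mathbb{P}$-a.s.\ for every $\mathbb{P}\in\mathcal{P}$ (in particular, a surely/pointwise inequality suffices) for monotonicity to survive the supremum; this is harmless in context, as every invocation of monotonicity in the paper (e.g., in Lemma \ref{lem:stability} and Theorems \ref{thm:uub}--\ref{thm:r_inv}) applies it to inequalities that hold for all realizations of the disturbance.
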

\subsection{Useful Result}
This paper repeatedly utilizes the following inequality.
\begin{lem}[Cauchy-Schwarz inequality]\label{lem:ineq}
For two vectors $x, y \in \mathbb{R}^n$, it holds that
\begin{align*}
2x^\top y \leq \alpha^2 \|x\|^2 +\frac{1}{\alpha^2} \|y\|^2, \ \forall \alpha > 0.
\end{align*}
\end{lem}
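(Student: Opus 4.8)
The plan is to derive the bound from the most elementary fact available, namely that the squared Euclidean norm of any real vector is nonnegative. The key observation is that the desired inequality is nothing more than a rearrangement of $\|\alpha x - \frac{1}{\alpha} y\|^2 \geq 0$, so I would introduce this auxiliary vector and expand its norm. This is the weighted (or scaled) form of Young's inequality, and the completion-of-squares structure is already visible in the right-hand side $\alpha^2\|x\|^2 + \frac{1}{\alpha^2}\|y\|^2$.

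First, I would fix an arbitrary $\alpha > 0$ and consider the vector $\alpha x - \frac{1}{\alpha} y \in \mathbb{R}^n$. Since it is a real vector, $\|\alpha x - \frac{1}{\alpha} y\|^2 \geq 0$. Expanding via the bilinearity of the inner product produces three terms: the diagonal contributions $\alpha^2 \|x\|^2$ and $\frac{1}{\alpha^2}\|y\|^2$, together with the cross term $-2(\alpha x)^\top(\frac{1}{\alpha} y)$. The crucial simplification is that the scalars $\alpha$ and $1/\alpha$ cancel in the cross term, leaving exactly $-2 x^\top y$ independently of the value of $\alpha$. Moving this cross term to the other side of the inequality then yields $2 x^\top y \leq \alpha^2\|x\|^2 + \frac{1}{\alpha^2}\|y\|^2$, and since $\alpha > 0$ was arbitrary, the bound holds for all such $\alpha$.

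I anticipate no genuine obstacle here, as the statement reduces directly to the nonnegativity of a squared norm. The only point requiring a moment of care is the deliberate choice to scale $x$ by $\alpha$ and $y$ by $1/\alpha$; this particular scaling is precisely what forces the cross term to be independent of $\alpha$ and equal to the target quantity $2 x^\top y$, while distributing the freedom in $\alpha$ between the two quadratic terms on the right-hand side.
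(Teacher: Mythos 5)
Your proof is correct: expanding $\bigl\|\alpha x - \tfrac{1}{\alpha}y\bigr\|^2 \geq 0$ gives $\alpha^2\|x\|^2 - 2x^\top y + \tfrac{1}{\alpha^2}\|y\|^2 \geq 0$, which rearranges exactly to the claim, and this is the standard (essentially the only natural) argument. The paper itself states Lemma~\ref{lem:ineq} without proof, so your completion-of-squares derivation simply supplies the canonical justification the author took for granted.
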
 


\section{Linear System without Inputs} \label{sec:stab}
This section considers linear systems subject to stochastic disturbances. 
After introducing a system model, stability, ultimate boundedness, and positive invariance are investigated.
\subsection{System Model}
Consider the discrete-time linear stochastic system 
\begin{align}
x_{t+1} =Ax_t+Ew_t, \label{eq:no_control}
\end{align}
where $x_t \in \mathbb{R}^{n_x}$ is the state and $w_t \in \mathbb{R}^{n_w}$ is the disturbance, respectively, at discrete time instant $t \in \mathbb{Z}_{\geq 0}$. 
$A \in \mathbb{R}^{n_x\times n_x}$ and $E \in \mathbb{R}^{n_x\times n_w}$ are constant matrices.  
It is assumsed that the initial condition $x_0 \in  \mathbb{R}^{n_x}$ is given, and that $w_t$ are independent and identically distributed random vectors with the mean zero and covariance $\Sigma_w \succ 0$ for all $t \in \mathbb{Z}_{\geq 0}$.
The true underlying probability measure $\mathbb{P}$ is not known exactly, but it is known that  $\mathbb{P}\in \mathcal{P}$, where
\begin{align} \label{eq:P1}
 \mathcal{P}\!= \! \left\{\mathbb{P}\! :\! \mathbb{E}_{\mathbb{P}}\left[ \begin{bmatrix}w_i\\ 1\end{bmatrix} \begin{bmatrix}w_j\\ 1\end{bmatrix}^{\!\top} \right]
\!\!=\begin{bmatrix} \Sigma_w \delta_{ij}  & 0 \!\\\! 0^\top & 1 \end{bmatrix}, \forall i, j\right\}.
\end{align}

For $t\in \mathbb{Z}_{> 0}$, the state evolution of \eqref{eq:no_control} can be expressed as
\begin{align}
x_t = F_t x_0 +  H_t \bar{w}_t, 
\end{align}
using
\begin{align}\begin{aligned}\label{eq:abb}
\bar{w}_t &= [w_0^\top, \ w_1^\top, \cdots, w_{t-1}^\top]^\top,\\
F_t &= A^t, \\
H_t &=\begin{bmatrix}A^{t-1}E&A^{t-2}E&\cdots & E\end{bmatrix}.
\end{aligned}\end{align}
With this notation,  from Lemma \ref{lem:aug}, $\bar{w}_t\in \mathbb{R}^{n_wt}$ is a random vector with the mean zero and covariance $I_t\otimes \Sigma_w$. 
Thus, the true underlying probability measure $\mathbb{P}$ for $\bar{w}_t$ satisfies $\mathbb{P}\in \mathcal{P}_t$, where
\begin{align}
 \mathcal{P}_t \!= \! \left\{\mathbb{P}\! :\! \mathbb{E}_{\mathbb{P}}\left[ \begin{bmatrix}\bar{w}_{t,i}\\ 1\end{bmatrix}  \begin{bmatrix}\bar{w}_{t,j}\\ 1\end{bmatrix}^{\top} \right]
\!\!= \begin{bmatrix} (I_t\otimes \Sigma_w) \delta_{ij} & 0\!\\\! 0 & 1 \end{bmatrix}, \forall i, j\right\}.
\end{align}

\subsection{Practical Stability and Ultimate Boundedness}
Here, the stability notion in terms of the worst-case CVaR is introduced.
\begin{defi}[Practical stability] \label{defi:stability}
The system \eqref{eq:no_control} is practically asymptotically worst-case CVaR stable if there exist $\beta \in \mathcal{KL}$ and a constant $c \geq 0$ such that
\begin{align}\begin{aligned}
&\sup_{\mathbb{P}\in \mathcal{P}_t}\mathbb{P}\text{-CVaR}_{\varepsilon}[\|x_t\|^2]
 \leq  \beta(\|x_0\|^2,t) +c, \ \forall t \in \mathbb{Z}_{\geq 0}.
\end{aligned}\end{align}
\end{defi}

The following lemma provides a sufficient condition for a system to be practically asymptotically worst-case CVaR stable.
\begin{lem}[Sufficient condition for practical stability] \label{lem:stability}
If $\|A\| < 1$ and  $(A,E)$ is reachable, then the linear system \eqref{eq:no_control} 
is practically asymptotically worst-case CVaR  stable.
\end{lem}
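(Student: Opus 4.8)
The plan is to bound the worst-case CVaR of $\|x_t\|^2$ directly from the state-evolution formula by recognizing it as a quadratic loss in the zero-mean stacked disturbance $\bar w_t$, apply the closed-form upper bound of Lemma~\ref{lem:CVaR_zero_mean_bound}, and then split the result into a transient part that decays geometrically in $t$ and a steady-state part that is uniformly bounded in $t$. Concretely, for $t\in\mathbb{Z}_{>0}$ I would write $\|x_t\|^2 = \|H_t\bar w_t + F_t x_0\|^2$, which is exactly of the form $\|A\xi+b\|^2+c$ with $\xi=\bar w_t$ (zero mean, covariance $I_t\otimes\Sigma_w$ by Lemma~\ref{lem:aug}), $A=H_t$, $b=F_t x_0$ and $c=0$. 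The upper bound of Lemma~\ref{lem:CVaR_zero_mean_bound} then gives
\begin{align*}
\sup_{\mathbb{P}\in\mathcal{P}_t}\mathbb{P}\text{-CVaR}_\varepsilon[\|x_t\|^2]
\leq \frac{1}{\varepsilon}\bigl(\text{Tr}((I_t\otimes\Sigma_w)H_t^\top H_t) + \|F_t x_0\|^2\bigr).
\end{align*}

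I would then treat the two terms separately. For the deterministic term, since $F_t=A^t$, submultiplicativity of the matrix norm gives $\|F_t x_0\|^2 \leq \|A\|^{2t}\|x_0\|^2$; because $\|A\|<1$, the candidate $\beta(s,t):=\tfrac{1}{\varepsilon}\|A\|^{2t}s$ is continuous and strictly increasing in $s$ with $\beta(0,t)=0$, and decreasing in $t$ with $\lim_{t\to\infty}\beta(s,t)=0$, so $\beta\in\mathcal{KL}$. For the stochastic term, using the cyclic property of the trace together with the block structure $H_t=[A^{t-1}E\ \cdots\ AE\ E]$ and the block-diagonal form of $I_t\otimes\Sigma_w$, I would compute
\begin{align*}
\text{Tr}\bigl((I_t\otimes\Sigma_w)H_t^\top H_t\bigr)
= \sum_{j=0}^{t-1}\text{Tr}\bigl(A^j E\Sigma_w E^\top (A^j)^\top\bigr).
\end{align*}

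The main work is then to bound this sum uniformly in $t$. Using that $E\Sigma_w E^\top\succeq 0$ and the elementary estimate $\text{Tr}(MXM^\top)\leq\|M\|^2\,\text{Tr}(X)$ for $X\succeq0$ (immediate from a spectral decomposition of $X$), each summand is at most $\|A\|^{2j}\,\text{Tr}(E\Sigma_w E^\top)$, so the sum is dominated by the convergent geometric series $\sum_{j=0}^{\infty}\|A\|^{2j}=1/(1-\|A\|^2)$, yielding the $t$-independent constant $c:=\tfrac{1}{\varepsilon}\,\text{Tr}(E\Sigma_w E^\top)/(1-\|A\|^2)$. Combining the two bounds gives the claimed inequality with this $\beta\in\mathcal{KL}$ and $c\geq0$ for $t\in\mathbb{Z}_{>0}$, while the case $t=0$ is immediate since $\|x_0\|^2$ is deterministic and $1/\varepsilon>1$. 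I expect the only genuinely delicate point to be the uniform-in-$t$ estimate of the trace term, which is precisely where $\|A\|<1$ is indispensable (it guarantees convergence of the geometric series and hence a finite $c$); by contrast, reachability of $(A,E)$ is not needed for this sufficiency direction and would only play a role in showing that the resulting ultimate bound is non-degenerate.
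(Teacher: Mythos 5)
Your proof is correct, but it follows a genuinely different route from the paper's. The paper splits $\|x_t\|^2=\|F_tx_0+H_t\bar w_t\|^2$ via the parametrized Cauchy--Schwarz inequality (its Lemma~\ref{lem:ineq}) into $(1+\alpha^2)\lambda^t\|x_0\|^2+(1+\tfrac{1}{\alpha^2})\|H_t\bar w_t\|^2$, then uses translation invariance plus the exact zero-offset CVaR formula of Lemma~\ref{lem:aug}, and finally bounds the trace term by $\text{Tr}(P)$ with $P$ the solution of the Lyapunov equation $APA^\top-P+E\Sigma_wE^\top=0$. You instead invoke the $b\neq 0$ upper bound of Lemma~\ref{lem:CVaR_zero_mean_bound} in one shot with $b=F_tx_0$, which eliminates the free parameter $\alpha$ but pushes the deterministic term inside the $\tfrac{1}{\varepsilon}$ factor, so your transient coefficient is $\tfrac{1}{\varepsilon}$ (potentially large for small $\varepsilon$) where the paper's $(1+\alpha^2)$ can be taken near $1$; and you replace $\text{Tr}(P)$ with the cruder geometric-series bound $\text{Tr}(E\Sigma_wE^\top)/(1-\|A\|^2)$, which is looser (indeed $\text{Tr}(P)\leq\text{Tr}(E\Sigma_wE^\top)/(1-\|A\|^2)$) but entirely elementary. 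Your closing observation is sharp and worth keeping: reachability of $(A,E)$ is not needed for this sufficiency claim --- even in the paper's proof it only guarantees $P\succ0$, a property never used in the bound, since convergence of the series defining $P$ follows from $A$ being Schur (implied by $\|A\|<1$) alone. One negligible edge case, shared with the paper's own choice of $\beta$: if $\|A\|=0$ your $\beta(s,t)=\tfrac{1}{\varepsilon}\|A\|^{2t}s$ vanishes identically for $t\geq1$ and is then not strictly increasing in $s$; replacing $\|A\|^2$ by any $\lambda\in(0,1)$ dominating it fixes this trivially.
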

\begin{proof}
Using Lemma \ref{lem:ineq}  and norm sub-multiplicativity, 
\begin{align}\begin{aligned}
\|x_t\|^2 &= \|F_t x_0  + H_t \bar{w}_t\|^2\\
& \leq   (1+\alpha^2) \|F_t\|^2 \|x_0\|^2 +\left(1+\frac{1}{\alpha^2}\right) \| H_t \bar{w}_t\|^2\\
& \leq   (1+\alpha^2)\lambda^{t} \|x_0\|^2+ \left(1+\frac{1}{\alpha^2}\right) \| H_t \bar{w}_t\|^2
\end{aligned}\end{align}
for any $\alpha>0$, where $\lambda = \|A\|^2 \in [0,1)$.
Using Proposition \ref{prop:pro} along with Lemma \ref{lem:aug}, it follows that
\begin{align}\begin{aligned}
& \sup_{\mathbb{P}\in \mathcal{P}_t}\mathbb{P}\text{-CVaR}_{\varepsilon}[\|x_t\|^2]\\
& \leq (1+\alpha^2)\lambda^{t}\|x_0 \|^2  +\left(1+\frac{1}{\alpha^2}\right)\frac{1}{\varepsilon}\text{Tr}( (I_t\otimes \Sigma_w)H_t^\top H_t)\\
& \leq (1+\alpha^2)\lambda^{t} \|x_0 \|^2 + \left(1+\frac{1}{\alpha^2}\right) \frac{1}{\varepsilon}\text{Tr}(P).
\end{aligned}\end{align}
Here, we used
\begin{align}\begin{aligned}
 \text{Tr}( (I_t\otimes \Sigma_w)H_t^\top H_t) 
 &= \text{Tr}\left( \sum_{k=0}^{t-1}(A^k) E\Sigma_w E^\top (A^\top)^k\right)\\
&\leq \text{Tr}\left(   \sum_{k=0}^{\infty}(A^k) E\Sigma_w E^\top (A^\top)^k\right)\\
&= \text{Tr}(P),\label{eq:ps}
\end{aligned}\end{align}
where $P\succ 0$ is the solution to the Lyapunov equation
\begin{align}
A PA^\top-P+E\Sigma_w E^\top = 0. \label{eq:lyap}
\end{align}
The last equality in \eqref{eq:ps} is due to $A$ is Schur and $(A, E)$ is reachable.
Thus, choosing 
\begin{align}
\beta(s,t) =(1+\alpha^2)s\lambda^t, \ c =\left(1+\frac{1}{\alpha^2}\right) \frac{1}{\varepsilon}\text{Tr}(P)
\end{align}
satisfies Definition \ref{defi:stability}.
\end{proof}

\begin{rem}
For linear systems, assuming that $A$ is Schur is sufficient for practically asymptotically worst-case CVaR stability as for other kinds of stability.
However, we impose the assumption $\|A\|< 1$ in this paper, and Lemma \ref{lem:stability} is focused on such a case.
\end{rem}

Here, we introduce a notion of worst-case CVaR ultimate bound, which is an extension of the probabilistic ultimate bound \cite{KofDS12}. 
The notion of ultimate bound is closely related to practical asymptotic stability.
\begin{defi}[Ultimate bound]
Let $f: \mathbb{R}^{n_x}\rightarrow \mathbb{R}$.
A domain $\mathcal{D}=\{x \in \mathbb{R}^{n_x}: f(x)\leq 0\}$ is a worst-case CVaR ultimate bound for the system \eqref{eq:no_control} if for every initial state $x_0$,  
there exists $T=T(x_0)>0$ such that $ \sup_{\mathbb{P}\in \mathcal{P}_t}\mathbb{P}\text{-CVaR}_{\varepsilon}[f(x_t)]\leq0$ for all $ t\geq T$.
\end{defi}

A worst-case CVaR ultimate bound can be found as below:
\begin{thm}[A ultimate bound]\label{thm:uub}
Let define the bounded set
\begin{align}
\mathcal{D}= \{x \in \mathbb{R}^{n_x}:\|x\|^2 - r^2 \leq 0\}
\end{align}
for $r>0$. Consider the system \eqref{eq:no_control}
and assume that $\|A\| < 1$ and  $(A,E)$ is reachable.
If
\begin{align}
r^2 > \frac{1}{\varepsilon}\text{Tr}(P), \label{eq:uub_cond}
\end{align}
then, $\mathcal{D}$ is a worst-case CVaR ultimate bound for \eqref{eq:no_control}.
\end{thm}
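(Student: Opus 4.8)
The plan is to reduce the ultimate-bound claim to the practical-stability estimate already obtained in the proof of Lemma \ref{lem:stability}, using the translation invariance of the worst-case CVaR from Proposition \ref{prop:pro}. With the loss $f(x)=\|x\|^2-r^2$, translation invariance (applied with the constant $c=-r^2$) gives $\sup_{\mathbb{P}\in\mathcal{P}_t}\mathbb{P}\text{-CVaR}_{\varepsilon}[f(x_t)] = \sup_{\mathbb{P}\in\mathcal{P}_t}\mathbb{P}\text{-CVaR}_{\varepsilon}[\|x_t\|^2] - r^2$. Hence it suffices to show that $\sup_{\mathbb{P}\in\mathcal{P}_t}\mathbb{P}\text{-CVaR}_{\varepsilon}[\|x_t\|^2]$ eventually drops to $r^2$ or below, i.e. that it is at most $r^2$ for every $t$ beyond some threshold $T(x_0)>0$.

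Next I would invoke the bound established inside the proof of Lemma \ref{lem:stability}: for every $\alpha>0$, writing $\lambda=\|A\|^2\in[0,1)$, one has $\sup_{\mathbb{P}\in\mathcal{P}_t}\mathbb{P}\text{-CVaR}_{\varepsilon}[\|x_t\|^2] \le (1+\alpha^2)\lambda^{t}\|x_0\|^2 + \bigl(1+\tfrac{1}{\alpha^2}\bigr)\tfrac{1}{\varepsilon}\text{Tr}(P)$. The transient term decays geometrically in $t$, so the limiting value of this bound as $t\to\infty$ is the constant $\bigl(1+\tfrac{1}{\alpha^2}\bigr)\tfrac{1}{\varepsilon}\text{Tr}(P)$.

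The main obstacle is that this constant carries the factor $1+\tfrac{1}{\alpha^2}>1$, so for any fixed $\alpha$ it overshoots $\tfrac{1}{\varepsilon}\text{Tr}(P)$ and need not fall below $r^2$ under the hypothesis $r^2>\tfrac{1}{\varepsilon}\text{Tr}(P)$ alone. The resolution is to exploit the freedom in choosing $\alpha$. Set the gap $\delta = r^2 - \tfrac{1}{\varepsilon}\text{Tr}(P)>0$ and pick $\alpha$ large enough that $\tfrac{1}{\alpha^2}\cdot\tfrac{1}{\varepsilon}\text{Tr}(P)<\delta$, that is $\alpha^2 > \text{Tr}(P)/(\varepsilon\delta)$. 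For this now-fixed $\alpha$ the constant term is strictly below $r^2$, leaving a positive margin $\eta = r^2 - \bigl(1+\tfrac{1}{\alpha^2}\bigr)\tfrac{1}{\varepsilon}\text{Tr}(P) > 0$. Enlarging $\alpha$ inflates the transient coefficient $1+\alpha^2$, but this is harmless precisely because $\lambda^{t}\to 0$.

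Finally I would absorb the transient. Since $\lambda\in[0,1)$, there exists $T=T(x_0)>0$, depending on the fixed $\alpha$ and on $\|x_0\|$, such that $(1+\alpha^2)\lambda^{t}\|x_0\|^2 \le \eta$ for all $t\ge T$; explicitly any $T\ge \log\!\bigl((1+\alpha^2)\|x_0\|^2/\eta\bigr)/\log(1/\lambda)$ suffices when $x_0\neq 0$, and any $T>0$ works when $x_0=0$ since the transient term vanishes. For such $t$ the estimate yields $\sup_{\mathbb{P}\in\mathcal{P}_t}\mathbb{P}\text{-CVaR}_{\varepsilon}[\|x_t\|^2] \le \eta + \bigl(1+\tfrac{1}{\alpha^2}\bigr)\tfrac{1}{\varepsilon}\text{Tr}(P) = r^2$, whence $\sup_{\mathbb{P}\in\mathcal{P}_t}\mathbb{P}\text{-CVaR}_{\varepsilon}[f(x_t)] \le 0$ for all $t\ge T$. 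This is exactly the defining property of a worst-case CVaR ultimate bound, so $\mathcal{D}$ is such a bound.
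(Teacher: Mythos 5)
Your proposal is correct and follows essentially the same route as the paper's own proof: both rest on the estimate from the proof of Lemma \ref{lem:stability} plus the translation invariance in Proposition \ref{prop:pro}, choose $\alpha$ large enough that the residual constant $\left(1+\frac{1}{\alpha^2}\right)\frac{1}{\varepsilon}\text{Tr}(P)$ sits strictly below $r^2$ (your margin $\eta$ is the paper's $\delta$), and then let the geometric transient $(1+\alpha^2)\lambda^t\|x_0\|^2$ decay below that margin. You merely make explicit two points the paper leaves implicit, namely the quantitative choice $\alpha^2 > \text{Tr}(P)/(\varepsilon\delta)$ and an explicit formula for $T(x_0)$.
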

\begin{proof}
Under the condition \eqref{eq:uub_cond}, there exist $\delta, \alpha>0$ such that
\begin{align}
\delta = r^2- \left(1+\frac{1}{\alpha^2}\right)\frac{1}{\varepsilon}\text{Tr}(P)>0.
 \end{align}
Using the proof of Lemma \ref{lem:stability} with Proposition \ref{prop:pro},
\begin{align}\begin{aligned}
  &\sup_{\mathbb{P}\in \mathcal{P}_t}\mathbb{P}\text{-CVaR}_{\varepsilon}[\|x_t\|^2 - r^2]\\
 &\leq (1+\alpha^2)\lambda^{t} \|x_0 \|^2 +  \left(1+\frac{1}{\alpha^2}\right)\frac{1}{\varepsilon}\text{Tr}(P)- r^2\\
 &\leq (1+\alpha^2)\lambda^{t} \|x_0 \|^2 -\delta.
 \end{aligned} \end{align}
 Because the first term above approaches to $0$ as $t$ goes to infinity, there exists $T$ such that $ \sup_{\mathbb{P}\in \mathcal{P}_t}\mathbb{P}\text{-CVaR}_{\varepsilon}[\|x_t\|^2 - r^2]\leq0$ for all $ t\geq T$.
\end{proof}

\subsection{Positive Invariance}
The notion of positive invariance is yet another important concept, which is intimately related to ultimate boundedness.
\begin{defi}[Positively invariant set]
Let $f: \mathbb{R}^{n_x}\rightarrow \mathbb{R}$ be a continuous function. 
A domain $\mathcal{D}=\{x \in \mathbb{R}^{n_x}: f(x)\leq 0\}$ is a worst-case CVaR positively invariant set for the system \eqref{eq:no_control} if for any state $x_t \in \mathcal{D}$   
$\sup_{\mathbb{P}\in \mathcal{P}_k}\mathbb{P}\text{-CVaR}_{\varepsilon}[f(x_{t+k})]\leq0$ for all $ k\in \mathbb{Z}_{>0}$.
\end{defi}

A worst-case CVaR positively invariant set can be found as below:
\begin{thm}[A positively invariant set]\label{thm:inv}
Let define the bounded set
\begin{align}
\mathcal{D}= \{x \in \mathbb{R}^{n_x}:\|x\|^2 - r^2 \leq 0\},
\end{align}
for $r>0$.  Consider the system \eqref{eq:no_control} 
and assume  that $\|A\| < 1$ and  $(A,E)$ is reachable. If
\begin{align}
r^2 \geq \frac{1}{\varepsilon(1-\|A\|)^2 }\text{Tr}(\Sigma_wE^\top E),\label{eq:invariant_cond_alpha}
\end{align}
then, $\mathcal{D}$ is a worst-case CVaR positively invariant set for \eqref{eq:no_control}.
\end{thm}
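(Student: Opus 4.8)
The plan is to fix an arbitrary state $x_t \in \mathcal{D}$ (so $\|x_t\|^2 \leq r^2$) and an arbitrary horizon $k \in \mathbb{Z}_{>0}$, and to show $\sup_{\mathbb{P}\in\mathcal{P}_k}\mathbb{P}\text{-CVaR}_{\varepsilon}[\|x_{t+k}\|^2] \leq r^2$; translation invariance (Proposition \ref{prop:pro}) then converts this into the required $\sup_{\mathbb{P}\in\mathcal{P}_k}\mathbb{P}\text{-CVaR}_{\varepsilon}[\|x_{t+k}\|^2 - r^2] \leq 0$. Writing the $k$-step propagation from time $t$ as $x_{t+k} = A^k x_t + H_k\bar{w}_k$ with $H_k$ as in \eqref{eq:abb}, I would separate the deterministic and stochastic contributions via Lemma \ref{lem:ineq}, obtaining the almost-sure bound $\|x_{t+k}\|^2 \leq (1+\alpha^2)\|A\|^{2k}\|x_t\|^2 + (1+\tfrac{1}{\alpha^2})\|H_k\bar{w}_k\|^2$ for a parameter $\alpha>0$ to be pinned down later.

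Next I would take the worst-case CVaR of both sides. By monotonicity, sub-additivity, and positive homogeneity (Proposition \ref{prop:pro}), the deterministic first term passes through unchanged and, since $x_t\in\mathcal{D}$, contributes at most $(1+\alpha^2)\|A\|^{2k}r^2$. For the stochastic term, Lemma \ref{lem:aug} gives $\sup_{\mathbb{P}}\mathbb{P}\text{-CVaR}_{\varepsilon}[\|H_k\bar{w}_k\|^2] = \tfrac{1}{\varepsilon}\text{Tr}((I_k\otimes\Sigma_w)H_k^\top H_k)$. I would then expand this as $\sum_{i=0}^{k-1}\text{Tr}(A^iE\Sigma_wE^\top(A^\top)^i)$, exactly as in \eqref{eq:ps}, and bound each summand by $\|A\|^{2i}\text{Tr}(\Sigma_wE^\top E)$ using $(A^\top)^iA^i \preceq \|A\|^{2i}I$, summing the geometric series to obtain $\tfrac{1}{\varepsilon}\text{Tr}(\Sigma_wE^\top E)\tfrac{1-\|A\|^{2k}}{1-\|A\|^2}$.

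The crux is the choice of $\alpha$. I would take $\alpha^2 = (1-\|A\|)/\|A\|$, assuming $A\neq 0$ (the case $A=0$ is immediate, since then $x_{t+k}=Ew_{t+k-1}$ and the hypothesis directly yields the bound $r^2$). For this value one has $1+\alpha^2 = 1/\|A\|$ and $1+\tfrac{1}{\alpha^2} = 1/(1-\|A\|)$. Substituting the hypothesis $\tfrac{1}{\varepsilon}\text{Tr}(\Sigma_wE^\top E)\leq r^2(1-\|A\|)^2$ together with the identity $\tfrac{(1-\|A\|)^2}{1-\|A\|^2}=\tfrac{1-\|A\|}{1+\|A\|}$, the entire bound collapses to $r^2\bigl(\|A\|^{2k-1} + \tfrac{1-\|A\|^{2k}}{1+\|A\|}\bigr)$. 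It then remains to verify the scalar inequality $\|A\|^{2k-1} + \tfrac{1-\|A\|^{2k}}{1+\|A\|}\leq 1$, which after clearing the denominator reduces to $\|A\|^{2k-1}\leq\|A\|$, true for every $k\geq1$ because $\|A\|\in[0,1)$.

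I expect the main obstacle to be identifying this particular $\alpha$ rather than any single calculation. A generic $\alpha$, as used in the proof of Theorem \ref{thm:uub}, produces only a $\tfrac{1}{1-\|A\|^2}$-type denominator and leaves a state-dependent term that does not vanish uniformly in $k$, so invariance would fail. The value $\alpha^2=(1-\|A\|)/\|A\|$ is precisely the AM--GM-optimal choice that balances the deterministic and stochastic coefficients and is what manufactures the stated $(1-\|A\|)^2$ factor; fixing it and then confirming that the resulting bound holds simultaneously for \emph{all} $k\in\mathbb{Z}_{>0}$ (with equality at $k=1$) is the delicate part. I would also note that the reachability hypothesis, though convenient for consistency with the earlier results, is not actually invoked here — only $\|A\|<1$ is used.
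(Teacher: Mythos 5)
Your proposal is correct, but it takes a genuinely different route from the paper. The paper proves invariance one step at a time: it uses the single-step decomposition $\|x_{t+1}\|^2 \leq (1+\alpha^2)\|Ax_t\|^2 + \left(1+\tfrac{1}{\alpha^2}\right)\|Ew_t\|^2$ with the same balancing choice $(1+\alpha^2)\|A\|=1$ (i.e., your $\alpha^2 = (1-\|A\|)/\|A\|$), deduces $\sup_{\mathbb{P}}\mathbb{P}\text{-CVaR}_{\varepsilon}[\|x_{t+1}\|^2] \leq \|A\|\|x_t\|^2 + \tfrac{1}{1-\|A\|}\tfrac{1}{\varepsilon}\text{Tr}(\Sigma_w E^\top E)$, and then closes the argument by induction on $k$, feeding the CVaR bound at step $k$ back into the same recursion to get step $k+1$. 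You instead work with the joint $k$-step propagation $x_{t+k}=A^kx_t+H_k\bar w_k$ over $\mathcal{P}_k$ directly, evaluate the stochastic term exactly via Lemma \ref{lem:aug}, and verify the scalar inequality $\|A\|^{2k-1}+\tfrac{1-\|A\|^{2k}}{1+\|A\|}\leq 1$ for all $k$. Your approach buys rigor at a known soft spot: the paper's inductive step applies the one-step recursion to a quantity that is already a worst-case CVaR (effectively invoking a nested/tower-like property of the risk measure across $\mathcal{P}_k$ and $\mathcal{P}_{k+1}$), which requires additional justification via coherence and marginalization; your single-shot bound over $\mathcal{P}_k$ sidesteps this entirely. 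The price is that you must keep the exact partial sum $\tfrac{1-\|A\|^{2k}}{1-\|A\|^2}$ — as you correctly note, the crude bound $\tfrac{1}{1-\|A\|^2}$ would already fail at $k=1$ — whereas the paper's one-step contraction needs no such care and transfers verbatim to the bounded-input setting of Theorem \ref{thm:r_inv}. Two minor points in your favor: your separate treatment of $A=0$ (where the balancing $\alpha$ is undefined) is a detail the paper glosses over, since \eqref{eq:alpha} also has no solution when $\|A\|=0$; and your observation that reachability of $(A,E)$ is never invoked is accurate — it holds for the paper's proof as well, since the invariance condition \eqref{eq:invariant_cond_alpha} involves $\text{Tr}(\Sigma_w E^\top E)$ rather than the Lyapunov solution $P$.
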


\begin{proof}
First, we show that $x_t \in \mathcal{D}$ implies $\sup_{\mathbb{P}\in \mathcal{P}}\mathbb{P}\text{-CVaR}_{\varepsilon}[\|x_{t+1}\|^2- r^2]\leq0$.
Using Lemma \ref{lem:ineq}, for any $\alpha>0$,
\begin{align}\begin{aligned}
\|x_{t+1}\|^2 
&=\|Ax_t+Ew_t\|^2\\
&\leq (1+\alpha^2)\|Ax_t\|^2+\left(1+\frac{1}{\alpha^2}\right) \|Ew_t\|^2.
\end{aligned}\end{align}
Choose $\alpha>0$ that satisfies
\begin{align}
(1+\alpha^2)\|A\| =1\label{eq:alpha}.
\end{align}
Such an $\alpha$ always exists for $\|A\| < 1$.
Using Proposition \ref{prop:pro} as well as the norm sub-multiplicativity and Lemma \ref{lem:CVaR_zero_mean_bound}, it follows that
\begin{align}\begin{aligned}\label{eq:xt}
&\sup_{\mathbb{P}\in \mathcal{P}}\mathbb{P}\text{-CVaR}_{\varepsilon}[\|x_{t+1}\|^2]
  \leq
 \|A\|\|x_t\|^2 +\frac{1}{1-\|A\|}\frac{1}{\varepsilon}\text{Tr}(\Sigma_wE^\top E).
\end{aligned}\end{align}
Hence, if $x_t \in\mathcal{D}$, using the condition \eqref{eq:invariant_cond_alpha},  it follows that
\begin{align}\begin{aligned}
&\sup_{\mathbb{P}\in \mathcal{P}}\mathbb{P}\text{-CVaR}_{\varepsilon}[\|x_{t+1}\|^2-r^2] \\
 &\leq  \|A\|r^2+\frac{1}{1-\|A\|}\left(1-\|A\|\right)^2r^2 - r^2=0.
\end{aligned}\end{align}

Next, we show $\sup_{\mathbb{P}\in \mathcal{P}_k}\mathbb{P}\text{-CVaR}_{\varepsilon}[\|x_{t+k}\|^2- r^2]\leq0$ for some $k>0$ implies $\sup_{\mathbb{P}\in \mathcal{P}_{k+1}}\mathbb{P}\text{-CVaR}_{\varepsilon}[\|x_{t+k+1}\|^2- r^2]\leq0$.
Again, using Lemma \ref{lem:ineq}, for any $\alpha>0$,
\begin{align}\begin{aligned}
\|x_{t+k+1}\|^2 
&=\|Ax_{t+k}+Ew_t\|^2\\
&\leq (1+\alpha^2)\|Ax_{t+k}\|^2+\left(1+\frac{1}{\alpha^2}\right) \|Ew_{t+k}\|^2.
\end{aligned}\end{align}
Choose $\alpha>0$ that satisfies \eqref{eq:alpha}.
As before, we have
\begin{align}\begin{aligned}
&\sup_{\mathbb{P}\in \mathcal{P}}\mathbb{P}\text{-CVaR}_{\varepsilon}[\|x_{t+k+1}\|^2-r^2]\\
&  \leq
 \|A\|\sup_{\mathbb{P}\in \mathcal{P}}\mathbb{P}\text{-CVaR}_{\varepsilon}[\|x_{t+k}\|^2] +\frac{1}{1-\|A\|}\frac{1}{\varepsilon}\text{Tr}(\Sigma_wE^\top E)-r^2\\
 &\leq  \|A\|r^2+\frac{1}{1-\|A\|}\left(1-\|A\|\right)^2r^2 - r^2=0.
\end{aligned}\end{align}
This completes the proof.
\end{proof}

\begin{rem}
With $\alpha$ that does not satisfy \eqref{eq:alpha}, a similar argument still holds by replacing
the condition \eqref{eq:invariant_cond_alpha} with
\begin{align}
\frac{1}{\varepsilon}\text{Tr}(\Sigma_wE^\top E)\leq \alpha^2 \left(\frac{1}{1+\alpha^2}-\|A\|^2\right)r^2. \label{eq:invariant_cond}
\end{align}
This right-hand-side is maximized with $\alpha$ in \eqref{eq:alpha}.
\end{rem}

\section{Linear System with Bounded Inputs}\label{sec:rob}
In this section, linear systems with bounded inputs subject to stochastic disturbances are considered. 
To deal with such a case, the notion of input-to-state stability and robust positive invariance are developed using the worst-case CVaR.

\subsection{System Model}
Consider the discrete-time linear stochastic system 
\begin{align}
x_{t+1} =Ax_t+D v_t+Ew_t, \label{eq:sys}
\end{align}
where $x_t \in \mathbb{R}^{n_x}$ is the state, $v_t \in \mathcal{V}$ is the bounded input and $w_t \in \mathbb{R}^{n_w}$ is the disturbance, respectively, at discrete time instant $t \in \mathbb{Z}_{\geq 0}$. 
$A \in \mathbb{R}^{n_x\times n_x}$, $D \in \mathbb{R}^{n_x\times n_v}$ and $E \in \mathbb{R}^{n_x\times n_w}$ are constant matrices.  
It is assumsed that the initial condition $x_0 \in  \mathbb{R}^{n_x}$ is given, and that $w_t$ are independent and identically distributed random vectors defined for \eqref{eq:no_control}. 

We treat $v_t \in \mathcal{V}$ as a bounded disturbance and define
\begin{align}
 \mathcal{V} =\{v\in\mathbb{R}^{n_v}:\|v\| \leq d\} \label{eq:v}
\end{align}
for a given $d$.

For $t\in \mathbb{Z}_{> 0}$, the state evolution of \eqref{eq:sys} can be expressed by
\begin{align}
x_t = F_t x_0 + G_t\bar{v}_t + H_t \bar{w}_t, \label{eq:x_t}
\end{align}
using \eqref{eq:abb} and
\begin{align}\begin{aligned}
\bar{v}_t &= [v_0^\top, \ v_1^\top, \cdots, v_{t-1}^\top]^\top,\\
G_t &= \begin{bmatrix}A^{t-1}D&A^{t-2}D&\cdots & D\end{bmatrix}.
\end{aligned}\end{align}

\subsection{Practical Stability and Ultimate Boundedness}
As in the previous section, we define the input-to-state stability using the worst-case CVaR.
\begin{defi}[Practical stability] \label{defi:stability2}
The system \eqref{eq:sys} is practically worst-case CVaR input-to-state stable if there exist $\beta \in \mathcal{KL}$, $\gamma \in \mathcal{K}$, and $c \geq 0$ such that 
\begin{align}\begin{aligned}
&\sup_{\mathbb{P}\in \mathcal{P}_t}\mathbb{P}\text{-CVaR}_{\varepsilon}[\|x_t\|^2] \\
  &\leq   \beta( \|x_0\|^2, t) +\gamma (\max_{\tau \in [0,t-1]} (\|v_{\tau}\|^2)) +c, \ \forall t \in \mathbb{Z}_{\geq 0}.
\end{aligned}\end{align}
\end{defi}
Note that if a system is practically worst-case CVaR input-to-state stable, then it is practically asymptotically worst-case CVaR stable if there is no input.

The following lemma provides a sufficient condition for a system to be practically worst-case CVaR input-to-state stable.
\begin{lem}[Sufficient condition for practical stability]\label{lem:io_stability}
If $\|A\|<1$ and  $(A,E)$ is reachable, then the linear system \eqref{eq:no_control} 
is practically worst-case CVaR  input-to-state stable.
\end{lem}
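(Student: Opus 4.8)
The plan is to follow the proof of Lemma \ref{lem:stability} almost verbatim, carrying the extra bounded-input term through the same Cauchy--Schwarz and coherence arguments. Starting from the explicit solution \eqref{eq:x_t}, I would separate $x_t$ into its deterministic part $y_t := F_t x_0 + G_t \bar{v}_t$ and its stochastic part $z_t := H_t \bar{w}_t$. A first application of Lemma \ref{lem:ineq} with a parameter $\alpha_1 > 0$ gives the almost-sure bound $\|x_t\|^2 \leq (1+\alpha_1^2)\|y_t\|^2 + (1+\frac{1}{\alpha_1^2})\|z_t\|^2$, and a second application to $y_t$ with a parameter $\alpha_2 > 0$ yields $\|y_t\|^2 \leq (1+\alpha_2^2)\|F_t x_0\|^2 + (1+\frac{1}{\alpha_2^2})\|G_t \bar{v}_t\|^2$.

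Next I would pass to the worst-case CVaR of the first inequality. Because $y_t$ is deterministic with respect to the disturbance $\bar{w}_t$, the coherence properties in Proposition \ref{prop:pro} --- monotonicity, sub-additivity, and positive homogeneity, together with the fact that the CVaR of a constant equals that constant --- reduce the task to bounding the two deterministic quantities $\|F_t x_0\|^2$ and $\|G_t \bar{v}_t\|^2$ and the single stochastic quantity $\sup_{\mathbb{P}}\mathbb{P}\text{-CVaR}_{\varepsilon}[\|z_t\|^2]$. Norm sub-multiplicativity controls the first as $\|F_t x_0\|^2 \leq \|A\|^{2t}\|x_0\|^2 = \lambda^t \|x_0\|^2$ with $\lambda = \|A\|^2 \in [0,1)$, and the stochastic quantity is handled exactly as in \eqref{eq:ps}: Lemma \ref{lem:aug} together with the Lyapunov equation \eqref{eq:lyap} gives $\sup_{\mathbb{P}}\mathbb{P}\text{-CVaR}_{\varepsilon}[\|z_t\|^2] = \frac{1}{\varepsilon}\text{Tr}((I_t\otimes\Sigma_w)H_t^\top H_t) \leq \frac{1}{\varepsilon}\text{Tr}(P)$.

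The only genuinely new estimate is the input term, and this is the step I expect to demand the most care. Writing $G_t \bar{v}_t = \sum_{k=0}^{t-1} A^{t-1-k} D v_k$, the triangle inequality together with the geometric sum $\sum_{j=0}^{t-1}\|A\|^j \leq \frac{1}{1-\|A\|}$ (finite precisely because $\|A\| < 1$) gives $\|G_t \bar{v}_t\| \leq \frac{\|D\|}{1-\|A\|}\max_{\tau\in[0,t-1]}\|v_\tau\|$, and hence $\|G_t \bar{v}_t\|^2 \leq \frac{\|D\|^2}{(1-\|A\|)^2}\max_{\tau\in[0,t-1]}\|v_\tau\|^2$. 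The mild subtlety here is that this bound must be uniform in $t$ and expressed through $\max_{\tau}\|v_\tau\|^2$, so that it feeds into a bona fide $\mathcal{K}$ function rather than leaving a residual dependence on $t$.

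Finally, collecting the three bounds I would read off
\begin{align*}
\beta(s,t) &= (1+\alpha_1^2)(1+\alpha_2^2)\lambda^t s,\\
\gamma(s) &= (1+\alpha_1^2)\left(1+\frac{1}{\alpha_2^2}\right)\frac{\|D\|^2}{(1-\|A\|)^2}\, s,\\
c &= \left(1+\frac{1}{\alpha_1^2}\right)\frac{1}{\varepsilon}\text{Tr}(P),
\end{align*}
and verify that $\beta \in \mathcal{KL}$ (linear and strictly increasing in $s$, while $\lambda^t$ decreases to $0$ in $t$ since $\lambda \in [0,1)$), $\gamma \in \mathcal{K}$ (linear, strictly increasing, vanishing at $0$), and $c \geq 0$, which is exactly the requirement of Definition \ref{defi:stability2}. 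Since setting $v_t \equiv 0$ makes $\gamma$ vanish and recovers the bound of Lemma \ref{lem:stability}, this is a clean generalization of the input-free case, consistent with the note following Definition \ref{defi:stability2}.
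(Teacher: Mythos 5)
Your proof is correct and follows essentially the same route as the paper's: two nested applications of Lemma \ref{lem:ineq} to the explicit solution \eqref{eq:x_t}, the geometric-sum bound $\|G_t\bar{v}_t\|\le \frac{\|D\|}{1-\|A\|}\max_{\tau}\|v_\tau\|$, and Lemma \ref{lem:aug} together with the Lyapunov equation \eqref{eq:lyap} for the stochastic term. The only difference is the grouping order---you split off the stochastic part $H_t\bar{w}_t$ first, whereas the paper splits off $F_t x_0$ first---which merely moves the factors $(1+\alpha_2^2)$ and $\bigl(1+\frac{1}{\alpha_2^2}\bigr)$ among $\beta$, $\gamma$, and $c$; both choices satisfy Definition \ref{defi:stability2}.
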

\begin{proof}
From \eqref{eq:v}, $v_{\tau}$ is bounded. Write  $ \bar{v}_{t-1}= \max_{\tau \in [0,t-1]} (\|v_{\tau}\|))(\leq d)$.
Using Lemma \ref{lem:ineq} twice and norm sub-multiplicativity, 
\begin{align}\begin{aligned}
\|x_t\|^2 &= \|F_t x_0 + G_t\bar{v}_t   + H_t \bar{w}_t\|^2\\
& \leq   (1+\alpha_1^2) \lambda ^t\|x_0 \|^2 \\
&+  \left(1+\frac{1}{\alpha_1^2}\right) \left(1+\alpha_2^2\right) \left(\frac{ \|D\|}{1-\|A\|}\right)^2 \bar{v}_{t-1}^2\\
&+\left(1+\frac{1}{\alpha_1^2}\right) \left(1+\frac{1}{\alpha_2^2}\right)  \| H_t \bar{w}_t\|^2
\end{aligned}\end{align}
for any $\alpha_1, \alpha_2 >0$, where $\lambda =\|A\|^2 \in [0, 1)$. 
Here, we used
\begin{align}\begin{aligned}
\|G_t\bar{v}_t \|^2&  = \left\|\sum_{k=0}^{t-1}A^kDv_{t-1-k}\right\|^2 \\
&\leq \left\|\sum_{k=0}^{t-1}\|A\|^k\|D\|\|v_{t-1-k}\|\right\|^2 \\
&\leq \left\|\sum_{k=0}^{t-1}\|A\|^k\|D\| \max_{\tau \in [0,t-1]} (\|v_{\tau}\|)\right\|^2 \\
&\leq \left(\frac{ \|D\| }{1-\|A\|}\right)^2 \bar{v}_{t-1}^2.
\end{aligned}\end{align}
Using Proposition \ref{prop:pro} along with Lemma \ref{lem:aug}, it follows that:
\begin{align}\begin{aligned} \label{eq:sc_ps}
&\ \sup_{\mathbb{P}\in \mathcal{P}_t}\mathbb{P}\text{-CVaR}_{\varepsilon}[\|x_t\|^2]\\
&\ \leq   (1+\alpha_1^2)\lambda ^t\|x_0 \|^2 
+ \left(1+\frac{1}{\alpha_1^2}\right) \left(1+\alpha_2^2\right)\left(\frac{\|D\|}{1-\|A\|}\right)^2\bar{v}_{t-1}^2\\
&\ + \left(1+\frac{1}{\alpha_1^2}\right) \left(1+\frac{1}{\alpha_2^2}\right)  \frac{1}{\varepsilon}\text{Tr}(P),
\end{aligned}\end{align}
where $P\succ 0$ is the solution to the Lyapunov equation \eqref{eq:lyap}. 
Thus, choosing 
\begin{align}\begin{aligned}
&\beta(s,t) = (1+\alpha_1^2)s\lambda^t, \\ 
&\gamma(\bar{v}_{t-1}) =\left(1+\frac{1}{\alpha_1^2}\right) \left(1+\alpha_2^2\right)\left(\frac{\|D\|}{1-\|A\|}\right)^2 \bar{v}_{t-1}^2, \\ 
&c =\left(1+\frac{1}{\alpha_1^2}\right) \left(1+\frac{1}{\alpha_2^2}\right) \frac{1}{\varepsilon}\text{Tr}(P)
\end{aligned}\end{align}
satisfies Definition \ref{defi:stability2}.
\end{proof}

Similar to the previous section, we introduce a notion of worst-case CVaR ultimate bound with bounded input.
\begin{defi}[Ultimate bound]
Let $f: \mathbb{R}^{n_x}\rightarrow \mathbb{R}$. 
A domain $\mathcal{D}=\{x \in \mathbb{R}^{n_x}: f(x)\leq 0\}$ is a worst-case CVaR ultimate bound with bounded input for the system \eqref{eq:sys} if for every initial state $x_0$,  
there exists $T=T(x_0)>0$ such that $ \sup_{\mathbb{P}\in \mathcal{P}_t}\mathbb{P}\text{-CVaR}_{\varepsilon}[f(x_t)]\leq0$ for all $ t\geq T$.
\end{defi}

A worst-case CVaR ultimate bound can be found as below:
\begin{thm}[A ultimate bound]\label{thm:r_uub}
Let define the bounded set
\begin{align}
\mathcal{D}= \{x \in \mathbb{R}^{n_x}:\|x\|^2 - r^2 \leq 0\},
\end{align}
for $r>0$. Consider the system \eqref{eq:sys}
and assume  that $\|A\| < 1$ and  $(A,E)$ is reachable. If
\begin{align}
r^2 > \left( \frac{\|D\|}{1-\|A\|} d +\sqrt{ \frac{1}{\varepsilon}\text{Tr}(P)}\right)^2, \label{eq:r_uub_cond}
\end{align}
then, $\mathcal{D}$ is a worst-case CVaR ultimate bound with bounded input for \eqref{eq:sys}.
\end{thm}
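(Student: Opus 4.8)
The plan is to reduce everything to the input-to-state bound \eqref{eq:sc_ps} established in the proof of Lemma \ref{lem:io_stability}. Introducing the abbreviations $a = \frac{\|D\|}{1-\|A\|}\,d$ for the deterministic-input contribution and $b = \sqrt{\frac{1}{\varepsilon}\text{Tr}(P)}$ for the stochastic contribution, the hypothesis \eqref{eq:r_uub_cond} reads simply as $r^2 > (a+b)^2$. First I would bound $\bar v_{t-1} \le d$ in \eqref{eq:sc_ps}, so that for every $\alpha_1,\alpha_2>0$,
\begin{align*}
\sup_{\mathbb{P}\in\mathcal{P}_t}\mathbb{P}\text{-CVaR}_\varepsilon[\|x_t\|^2]
&\le (1+\alpha_1^2)\lambda^t\|x_0\|^2 \\
&\quad + \left(1+\tfrac{1}{\alpha_1^2}\right)\left[(1+\alpha_2^2)a^2 + \left(1+\tfrac{1}{\alpha_2^2}\right)b^2\right].
\end{align*}

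The key step is to choose $\alpha_2$ to minimize the bracketed term. Expanding it gives $a^2 + b^2 + \alpha_2^2 a^2 + \frac{b^2}{\alpha_2^2}$, and by the arithmetic--geometric mean inequality $\alpha_2^2 a^2 + \frac{b^2}{\alpha_2^2} \ge 2ab$, with equality at $\alpha_2^2 = b/a$. Hence the bracketed term is minimized at exactly $(a+b)^2$, which is precisely why the condition \eqref{eq:r_uub_cond} carries the squared-sum form. With this optimal $\alpha_2$ fixed, the bound collapses to
\begin{align*}
\sup_{\mathbb{P}\in\mathcal{P}_t}\mathbb{P}\text{-CVaR}_\varepsilon[\|x_t\|^2]
\le (1+\alpha_1^2)\lambda^t\|x_0\|^2 + \left(1+\tfrac{1}{\alpha_1^2}\right)(a+b)^2.
\end{align*}

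Since $r^2 > (a+b)^2$, I can then pick $\alpha_1$ large enough that $\left(1+\frac{1}{\alpha_1^2}\right)(a+b)^2 < r^2$, and set $\delta = r^2 - \left(1+\frac{1}{\alpha_1^2}\right)(a+b)^2 > 0$. Applying translation invariance from Proposition \ref{prop:pro},
\begin{align*}
\sup_{\mathbb{P}\in\mathcal{P}_t}\mathbb{P}\text{-CVaR}_\varepsilon[\|x_t\|^2 - r^2]
\le (1+\alpha_1^2)\lambda^t\|x_0\|^2 - \delta.
\end{align*}
Because $\lambda \in [0,1)$, the transient term vanishes as $t\to\infty$, so there exists $T = T(x_0)$ with the right-hand side $\le 0$ for all $t \ge T$, establishing $\mathcal{D}$ as a worst-case CVaR ultimate bound with bounded input.

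The only subtlety I anticipate is the coupled choice of the two free parameters: $\alpha_2$ must be optimized first to fuse the input and noise contributions into the single quantity $(a+b)^2$ that matches the hypothesis, and only afterward can $\alpha_1$ be enlarged to absorb the residual slack $r^2 - (a+b)^2$. The degenerate case $a=0$ (equivalently $d=0$ or $D=0$) would be noted separately, since the minimizer $\alpha_2^2 = b/a$ is then undefined; there the bracket equals $b^2$ independently of $\alpha_2$ and the argument reduces to that of Theorem \ref{thm:uub}.
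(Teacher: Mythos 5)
Your proposal is correct and follows essentially the same route as the paper: both start from the bound \eqref{eq:sc_ps} with $\bar v_{t-1}\le d$, both substitute the same optimal $\alpha_2^2 = \frac{1-\|A\|}{\|D\|d}\sqrt{\frac{1}{\varepsilon}\text{Tr}(P)}$ (your $b/a$) to collapse the input and noise terms into $(a+b)^2$, and both then define the same slack $\delta$ via $\alpha_1$ and let the transient $(1+\alpha_1^2)\lambda^t\|x_0\|^2$ vanish. Your AM--GM justification of the choice of $\alpha_2$ and your remark on the degenerate case $\|D\|d=0$ (where the paper's substitution is likewise undefined) are welcome refinements but do not change the argument.
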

\begin{proof}
Under the condition \eqref{eq:r_uub_cond}, there exist $\delta, \alpha_1>0$ such that 
\begin{align}\begin{aligned}
\delta &= r^2-
 \left(1+\frac{1}{\alpha_1^2}\right) \left( \frac{\|D\|}{1-\|A\|} d +\sqrt{ \frac{1}{\varepsilon}\text{Tr}(P)}\right)^2>0, \label{eq:delta}
 \end{aligned} \end{align}
By using $\bar{v}_{t-1}\leq d$ and substituting
 \begin{align}\begin{aligned}
 \alpha_2^2=\frac{1-\|A\|} {\|D\|d} \sqrt{\frac{1}{\varepsilon}\text{Tr}(P)} > 0
  \end{aligned} \end{align}
  in \eqref{eq:sc_ps}, it follows that
  \begin{align}\begin{aligned}
&\ \sup_{\mathbb{P}\in \mathcal{P}_t}\mathbb{P}\text{-CVaR}_{\varepsilon}[\|x_t\|^2]\\
&\ \leq   (1+\alpha_1^2)\lambda ^t\|x_0 \|^2 
+  \left(1+\frac{1}{\alpha_1^2}\right) \left( \frac{\|D\|}{1-\|A\|} d +\sqrt{ \frac{1}{\varepsilon}\text{Tr}(P)}\right)^2.
\end{aligned}\end{align}
Thus using Proposition \ref{prop:pro} and \eqref{eq:delta},
  \begin{align}\begin{aligned} 
&\ \sup_{\mathbb{P}\in \mathcal{P}_t}\mathbb{P}\text{-CVaR}_{\varepsilon}[\|x_t\|^2-r^2]
 \leq  (1+\alpha_1^2)\lambda ^t\|x_0 \|^2 -\delta.
\end{aligned}\end{align}
 Because the first term approaches to $0$ as $t$ goes to infinity, there exists $T$ such that $ \sup_{\mathbb{P}\in \mathcal{P}_t}\mathbb{P}\text{-CVaR}_{\varepsilon}[\|x_t\|^2 - r^2]\leq0$ for all $ t\geq T$.
\end{proof}

\subsection{Positive Invariance}
The notion of robust positively invariant set can be defined for system with bounded input.
\begin{defi}[Robust positively invariant set] 
Let $f: \mathbb{R}^{n_x}\rightarrow \mathbb{R}$ be a continuous function. 
A domain $\mathcal{D}=\{x \in \mathbb{R}^{n_x}: f(x)\leq 0\}$ is a worst-case CVaR robust positively invariant set for the system \eqref{eq:sys} if for any state $x_t \in \mathcal{D}$   
$\sup_{\mathbb{P}\in \mathcal{P}_k}\mathbb{P}\text{-CVaR}_{\varepsilon}[f(x_{t+k})]\leq0$ for all $ k\in \mathbb{Z}_{> 0}$.
\end{defi}

A robust positively invariant set can be found as follows.
\begin{thm}[A robust positively invariant set]\label{thm:r_inv}
Let define the bounded set
\begin{align}
\mathcal{D}= \{x \in \mathbb{R}^{n_x}:\|x\|^2 - r^2 \leq 0\},
\end{align}
for $r>0$. Consider the system \eqref{eq:sys}
and assume that $\|A\| < 1$ and  $(A,E)$ is reachable. If
\begin{align}
r^2 \geq \frac{1}{(1-\|A\|)^2 } \left(\|D\|d +\sqrt{\frac{1}{\varepsilon}\text{Tr}(\Sigma_wE^\top E)} \right)^2, \label{eq:invariant_cond_alpha2}
\end{align}
then, $\mathcal{D}$ is a worst-case CVaR robust positively invariant set for \eqref{eq:sys}. 
Note $d$ is defined in \eqref{eq:v}.
\end{thm}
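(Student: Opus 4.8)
The plan is to mirror the two-part structure of the proof of Theorem \ref{thm:inv}, handling the extra deterministic input term $Dv_t$ through one additional application of Lemma \ref{lem:ineq}. First I would establish the base case: that any $x_t \in \mathcal{D}$ forces $\sup_{\mathbb{P}\in\mathcal{P}}\mathbb{P}\text{-CVaR}_{\varepsilon}[\|x_{t+1}\|^2 - r^2] \le 0$. Starting from $\|x_{t+1}\|^2 = \|Ax_t + Dv_t + Ew_t\|^2$, I would apply Lemma \ref{lem:ineq} twice: once with a parameter $\alpha_1$ to split $Ax_t$ off from $Dv_t + Ew_t$, and once with a parameter $\alpha_2$ to split $Dv_t$ from $Ew_t$. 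This yields $\|x_{t+1}\|^2 \le (1+\alpha_1^2)\|A\|^2\|x_t\|^2 + (1+\tfrac{1}{\alpha_1^2})\big[(1+\alpha_2^2)\|Dv_t\|^2 + (1+\tfrac{1}{\alpha_2^2})\|Ew_t\|^2\big]$, using norm sub-multiplicativity on $\|Ax_t\|^2$.

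Next I would take the worst-case CVaR of both sides. By the coherence properties of Proposition \ref{prop:pro} (sub-additivity, positive homogeneity, translation invariance), the deterministic terms pass through unchanged and are bounded by $\|A\|^2\|x_t\|^2$ and $\|D\|^2 d^2$ respectively (using $\|v_t\|\le d$), while the purely stochastic term contributes $\sup_{\mathbb{P}\in\mathcal{P}}\mathbb{P}\text{-CVaR}_{\varepsilon}[\|Ew_t\|^2] = \frac{1}{\varepsilon}\text{Tr}(\Sigma_w E^\top E)$ by Lemma \ref{lem:CVaR_zero_mean_bound}. Choosing $\alpha_1$ so that $(1+\alpha_1^2)\|A\| = 1$ collapses the leading coefficient to $\|A\|$ and turns $1+\tfrac{1}{\alpha_1^2}$ into $\frac{1}{1-\|A\|}$, exactly as in Theorem \ref{thm:inv}. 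I would then select $\alpha_2$ with $\alpha_2^2 = \frac{1}{\|D\|d}\sqrt{\frac{1}{\varepsilon}\text{Tr}(\Sigma_w E^\top E)}$, the value that makes the two disturbance contributions combine into the perfect square $\frac{1}{1-\|A\|}\big(\|D\|d + \sqrt{\frac{1}{\varepsilon}\text{Tr}(\Sigma_w E^\top E)}\big)^2$. Substituting $\|x_t\|^2 \le r^2$ and invoking condition \eqref{eq:invariant_cond_alpha2}, which rearranges to $(\|D\|d + \sqrt{\frac{1}{\varepsilon}\text{Tr}(\Sigma_w E^\top E)})^2 \le (1-\|A\|)^2 r^2$, the bound reduces to $\|A\|r^2 + (1-\|A\|)r^2 = r^2$, so translation invariance delivers the base case.

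Finally I would close the induction exactly as in Theorem \ref{thm:inv}: assuming $\sup_{\mathbb{P}\in\mathcal{P}_k}\mathbb{P}\text{-CVaR}_{\varepsilon}[\|x_{t+k}\|^2 - r^2] \le 0$, I would apply the identical two-step decomposition to $\|x_{t+k+1}\|^2 = \|Ax_{t+k} + Dv_{t+k} + Ew_{t+k}\|^2$, now bounding the $\|A\|$-weighted term by $\sup_{\mathbb{P}\in\mathcal{P}_k}\mathbb{P}\text{-CVaR}_{\varepsilon}[\|x_{t+k}\|^2]$ (the inductive hypothesis) rather than by $\|x_t\|^2$. The same $\alpha_1,\alpha_2$ and the same use of \eqref{eq:invariant_cond_alpha2} then give $\sup_{\mathbb{P}\in\mathcal{P}_{k+1}}\mathbb{P}\text{-CVaR}_{\varepsilon}[\|x_{t+k+1}\|^2 - r^2]\le 0$.

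The step I expect to require the most care is the separation of the worst-case CVaR in the inductive step: the term $\|Ax_{t+k}\|^2$ depends on the accumulated disturbances $w_0,\dots,w_{t+k-1}$ (hence lives over $\mathcal{P}_k$), whereas $\|Ew_{t+k}\|^2$ depends only on the fresh, independent disturbance $w_{t+k}$. Justifying that sub-additivity of the worst-case CVaR lets these be bounded separately across the augmented ambient sets $\mathcal{P}_k$ and $\mathcal{P}_{k+1}$ rests on the i.i.d.\ structure and Lemma \ref{lem:aug}; since the no-input Theorem \ref{thm:inv} already executes precisely this separation, the extension is essentially mechanical once the extra deterministic $Dv_{t+k}$ term is absorbed through the $\alpha_2$-split.
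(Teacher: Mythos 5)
Your proposal is correct and follows essentially the same route as the paper's own proof: the identical two-part (base case plus induction) structure, the same double application of Lemma \ref{lem:ineq}, the same choice $(1+\alpha_1^2)\|A\|=1$, and the same $\alpha_2^2\|D\|d=\sqrt{\tfrac{1}{\varepsilon}\text{Tr}(\Sigma_w E^\top E)}$ that collapses the two disturbance terms into the perfect square $\tfrac{1}{1-\|A\|}\bigl(\|D\|d+\sqrt{\tfrac{1}{\varepsilon}\text{Tr}(\Sigma_w E^\top E)}\bigr)^2$. If anything, your write-up is slightly cleaner than the paper's, whose inductive-step display drops the $\|D\|d$ contribution from the squared term (an apparent typo), whereas you carry the full square through consistently.
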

\begin{proof}
First, we show that $x_t \in \mathcal{D}$ implies $\sup_{\mathbb{P}\in \mathcal{P}}\mathbb{P}\text{-CVaR}_{\varepsilon}[\|x_{t+1}\|^2- r^2]\leq0$.
Using Lemma \ref{lem:ineq}, for any $\alpha_1, \alpha_2>0$,
\begin{align}\begin{aligned}
\|x_{t+1}\|^2 
&=\|Ax_t+Dv_t+Ew_t\|^2\\
&\leq (1+\alpha_1^2)\|Ax_t\|^2+\left(1+\frac{1}{\alpha_1^2}\right)(1+\alpha_2^2) \|D\|^2 d^2\\
& \ +\left(1+\frac{1}{\alpha_1^2}\right)\left(1+\frac{1}{\alpha_2^2}\right)\|Ew_t\|^2. \label{eq:xt+1}
\end{aligned}\end{align}
Choose $\alpha_1>0$ that satisfies \eqref{eq:alpha} and $\alpha_2>0$ that satisfies
\begin{align}\begin{aligned}
\alpha_2^2 \|D\|d = \sqrt{\frac{1}{\varepsilon}\text{Tr}(\Sigma_wE^\top E)}.\label{eq:beta} 
\end{aligned}\end{align}
Using Proposition \ref{prop:pro} as well as the norm sub-multiplicativity and Lemma \ref{lem:CVaR_zero_mean_bound}, it follows that
\begin{align}\begin{aligned}\label{eq:wc_cvar_xt}
&\sup_{\mathbb{P}\in \mathcal{P}_{t+1}}\mathbb{P}\text{-CVaR}_{\varepsilon}[\|x_{t+1}\|^2]\\
  & \leq \|A\|\|x_t\|^2+\frac{1}{1-\|A\|}\left(\|D\|d +\sqrt{\frac{1}{\varepsilon}\text{Tr}(\Sigma_wE^\top E)} \right)^2.
\end{aligned}\end{align}
Hence, if $x_t \in\mathcal{D}$, using the condition \eqref{eq:invariant_cond_alpha2},  it follows that
\begin{align}\begin{aligned}
&\sup_{\mathbb{P}\in \mathcal{P}}\mathbb{P}\text{-CVaR}_{\varepsilon}[\|x_{t+1}\|^2-r^2] \\
 &\leq  \|A\|r^2+\frac{1}{1-\|A\|}\left(1-\|A\|\right)^2r^2 - r^2=0.
\end{aligned}\end{align}

Next, we show $\sup_{\mathbb{P}\in \mathcal{P}_k}\mathbb{P}\text{-CVaR}_{\varepsilon}[\|x_{t+k}\|^2- r^2]\leq0$ for some $k>0$ implies $\sup_{\mathbb{P}\in \mathcal{P}_{k+1}}\mathbb{P}\text{-CVaR}_{\varepsilon}[\|x_{t+k+1}\|^2- r^2]\leq0$.
Again, using Lemma \ref{lem:ineq}, for any $\alpha_1, \alpha_2>0$,
\begin{align}\begin{aligned}
\|x_{t+k+1}\|^2 
&=\|Ax_{t+k}+Dv_{t+k}+Ew_{t+k}\|^2\\
&\leq (1+\alpha_1^2)\|Ax_{t+k}\|^2+\left(1+\frac{1}{\alpha_1^2}\right)(1+\alpha_2^2) \|D\|^2 d^2\\
& \ +\left(1+\frac{1}{\alpha_1^2}\right)\left(1+\frac{1}{\alpha_2^2}\right)\|Ew_t\|^2. 
\end{aligned}\end{align}
Choose $\alpha_1>0$ that satisfies \eqref{eq:alpha} and $\alpha_2>0$ that satisfies \eqref{eq:beta}. 
As before, we have
\begin{align}\begin{aligned}
&\sup_{\mathbb{P}\in \mathcal{P}}\mathbb{P}\text{-CVaR}_{\varepsilon}[\|x_{t+k+1}\|^2-r^2]\\
&  \leq
 \|A\|\sup_{\mathbb{P}\in \mathcal{P}}\mathbb{P}\text{-CVaR}_{\varepsilon}[\|x_{t+k}\|^2] +\frac{1}{1-\|A\|}\frac{1}{\varepsilon}\text{Tr}(\Sigma_wE^\top E)-r^2\\
 &\leq  \|A\|r^2+\frac{1}{1-\|A\|}\left(1-\|A\|\right)^2r^2 - r^2=0.
\end{aligned}\end{align}
This completes the proof.
\end{proof}

\section{Event-triggered Control}\label{sec:event}
In this section, event-triggered control strategies are developed using the results in Section \ref{sec:rob}.

\subsection{System Model}
Consider the discrete-time linear control system subject to stochastic disturbance
\begin{align}
x_{t+1} =Ax_t+B u_t+Ew_t, \label{eq:sys2}
\end{align}
where $x_t \in \mathbb{R}^{n_x}$ is the state, $u_t \in \mathbb{R}^{n_u}$ is the control input  and $w_t \in \mathbb{R}^{n_w}$ is the disturbance, respectively, at discrete time instant $t \in \mathbb{Z}_{\geq 0}$. 
$A \in \mathbb{R}^{n_x\times n_x}$, $B \in \mathbb{R}^{n_x\times n_u}$ and $E \in \mathbb{R}^{n_x\times n_w}$ are constant matrices.
It is assumed that the initial condition $x_0 \in  \mathbb{R}^{n_x}$ is given and $w_t$ are independent and identically distributed random vectors  defined for \eqref{eq:no_control}. 
It is also assumed that a linear state feedback control $u_t = Kx_t$ has been designed such that $\|A+BK\| < 1$ for \eqref{eq:sys2} and $(A+BK,E)$ is reachable.

In this section, we design event-triggered control strategies that guarantee ultimate boundedness and positive invariance for the system \eqref{eq:sys2} with the set
\begin{align}
\mathcal{D}= \{x \in \mathbb{R}^{n_x}:\|x\|^2 - r^2 \leq 0\}. \label{eq:D_event}
\end{align}

To design trigger conditions, let us introduce the following notation:
Let the triggering time sequence $\{t_k\}_{k\in \mathbb{Z}_{\geq 0}}$, and define the state used for the control input by
\begin{align}
\hat{x}_t &= x_{t_k}, \  \forall t \in [t_k, t_{k+1}),
\end{align}
and the state error by
\begin{align}
e_t &= \hat{x}_t -x_t, \  \forall t \in [t_k, t_{k+1}).
\end{align}
Then the control law can be written as
\begin{align}
u_t = K\hat{x}_t, \  \forall t \in [t_k, t_{k+1}),
\end{align}
and the system \eqref{eq:sys2} can be written as
\begin{align}
x_{t+1} =(A+BK)x_t+BKe_t+Ew_t.
\end{align}

The rest of this section considers the event-triggering mechanism in the form of
\begin{align}
 t_{k+1} = \min\{ t > t_k : \phi(x_t, \hat{x}_t ) > \sigma\}, \ t_0 = 0, \label{eq:trigger_time}
\end{align}
where the triggering function $ \phi$ and the triggering threshold $\sigma$ are to be designed.
Note that such an event-trigger condition guarantees that $\phi(x_t, \hat{x}_t ) \leq \sigma$ for all $t \in \mathbb{Z}_{\geq 0}$.

We consider static event-triggered control strategies that use a constant error threshold $\sigma$. 

\subsection{Ultimate Boundedness}
Event-triggered control strategies that guarantee ultimate boundedness are followed from Theorem \ref{thm:r_uub}.

\begin{cor}\label{cor:et}
Suppose $r>0$ is chosen to satisfy
\begin{align}
r^2 > \frac{1}{\varepsilon}\text{Tr}(P), \label{eq:r_cond}
\end{align}
where $P\succ 0$ is the solution to the Lyapunov equation
\begin{align}
(A+BK) P(A+BK)^\top-P+E\Sigma_w E^\top = 0. \label{eq:lyap_et}
\end{align}
Then, the use of the event-triggered condition 
\begin{align}
\phi(x_t, \hat{x}_t ) = \|\hat{x}_t -x_t\| =  \|e_t\|> \sigma_1 \label{eq:trig1}
\end{align}
in \eqref{eq:trigger_time}
guarantees ultimate boundedness with \eqref{eq:D_event}
if $\sigma_1$ satisfies
\begin{align}
\sigma_1\leq \frac{1-\|A+BK\|}{\|BK\|}\left(r-\sqrt{\frac{1}{\varepsilon}\text{Tr}(P)}\right). \label{eq:sig1}
\end{align}
\end{cor}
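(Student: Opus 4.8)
The plan is to recognize the closed-loop error dynamics as an instance of the bounded-input system \eqref{eq:sys} and then apply Theorem \ref{thm:r_uub} essentially verbatim. First I would substitute the control law $u_t = K\hat{x}_t$ into \eqref{eq:sys2} and use $\hat{x}_t = x_t + e_t$ to write the closed loop as
\begin{align*}
x_{t+1} = (A+BK)x_t + BK e_t + E w_t .
\end{align*}
This matches \eqref{eq:sys} under the identification $A \leftarrow A+BK$, $D \leftarrow BK$, $v_t \leftarrow e_t$, and $E \leftarrow E$. The standing hypotheses $\|A+BK\| < 1$ and reachability of $(A+BK,E)$ are assumed for \eqref{eq:sys2}, so the assumptions of Theorem \ref{thm:r_uub} hold, and the matrix $P$ solving \eqref{eq:lyap_et} is precisely the Lyapunov solution \eqref{eq:lyap} for the substituted system.

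Second, I would convert the triggering rule into the input bound required by Theorem \ref{thm:r_uub}. By construction of \eqref{eq:trigger_time} with $\phi(x_t,\hat{x}_t)=\|e_t\|$ and threshold $\sigma_1$, the error satisfies $\|e_t\| \leq \sigma_1$ for all $t$, so $e_t$ acts as a bounded disturbance lying in the set $\mathcal{V}$ of \eqref{eq:v} with $d = \sigma_1$. Feeding this into Theorem \ref{thm:r_uub} shows that $\mathcal{D}$ of \eqref{eq:D_event} is a worst-case CVaR ultimate bound as soon as
\begin{align*}
r^2 > \left( \frac{\|BK\|}{1-\|A+BK\|}\,\sigma_1 + \sqrt{\frac{1}{\varepsilon}\text{Tr}(P)} \right)^2 .
\end{align*}

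The remaining step is purely algebraic: I would verify that \eqref{eq:r_cond} and \eqref{eq:sig1} together imply the displayed inequality. Condition \eqref{eq:r_cond} gives $r > \sqrt{\frac{1}{\varepsilon}\text{Tr}(P)}$, which makes the right-hand side of \eqref{eq:sig1} strictly positive so that a genuine positive threshold $\sigma_1$ exists. Rearranging \eqref{eq:sig1} moves the factor $\|BK\|/(1-\|A+BK\|)$ onto $\sigma_1$ and, after adding $\sqrt{\frac{1}{\varepsilon}\text{Tr}(P)}$ to both sides, yields $\frac{\|BK\|}{1-\|A+BK\|}\sigma_1 + \sqrt{\frac{1}{\varepsilon}\text{Tr}(P)} \leq r$; squaring the two nonnegative sides reproduces the hypothesis of Theorem \ref{thm:r_uub}.

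I expect the only genuinely delicate point to be the strict-versus-nonstrict bookkeeping. Theorem \ref{thm:r_uub} requires the strict inequality \eqref{eq:r_uub_cond} (its proof must extract a $\delta > 0$), whereas \eqref{eq:sig1} is written with ``$\leq$'' and hence only delivers the bracketed quantity $\leq r$. The strict inequality in \eqref{eq:r_cond} is what supplies the missing room: it forces $r - \sqrt{\frac{1}{\varepsilon}\text{Tr}(P)} > 0$, so the admissible interval for $\sigma_1$ has nonempty interior and any $\sigma_1$ taken strictly inside the bound \eqref{eq:sig1} drives the bracketed term strictly below $r$, securing the strict hypothesis needed to invoke the theorem. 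Apart from reconciling this boundary case, the argument is a direct specialization of the bounded-input ultimate-boundedness result, so no new estimates are needed.
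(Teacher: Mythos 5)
Your proposal is correct and takes essentially the same route as the paper, whose one-line proof performs exactly your substitution $A \leftarrow A+BK$, $D \leftarrow BK$, $v_t \leftarrow e_t$, $d \leftarrow \sigma_1$ and invokes Theorem \ref{thm:r_uub}; the algebra you supply showing that \eqref{eq:r_cond} and \eqref{eq:sig1} imply \eqref{eq:r_uub_cond} is the detail the paper leaves implicit. Your boundary-case observation is apt: when \eqref{eq:sig1} holds with equality the strict hypothesis \eqref{eq:r_uub_cond} fails (no $\delta>0$ can be extracted in the theorem's proof), a subtlety the paper's proof passes over—indeed its numerical example chooses $\sigma_1$ at equality—so insisting on strict inequality in \eqref{eq:sig1}, as you do, is the correct repair.
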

\begin{proof}
By replacing $A$, $D$, $v_t$ in \eqref{eq:sys}  and $d$ in \eqref{eq:v} by $A+BK$, $BK$, $e_t$ in \eqref{eq:sys2} and $\sigma_1$ in \eqref{eq:trigger_time}, respectively, 
 the result follows from Theorem \ref{thm:r_uub}.
\end{proof}

A similar result can be obtained using the error threshold on the control input error:
\begin{cor}\label{cor:et2}
Suppose $r>0$ is chosen to satisfy the condition \eqref{eq:r_cond}.
The use of the static event-triggered condition
\begin{align}
\phi(x_t, \hat{x}_t ) = \|K(\hat{x}_t -x_t)\| = \|Ke_t\|> \sigma_2 \label{eq:trig2}
\end{align}
with
\begin{align}
\sigma_2\leq \frac{1-\|A+BK\|}{\|B\|}\left(r-\sqrt{\frac{1}{\varepsilon}\text{Tr}(P)}\right).\label{eq:sig2}
\end{align}
in \eqref{eq:trigger_time} guarantees ultimate boundedness with \eqref{eq:D_event}.
\end{cor}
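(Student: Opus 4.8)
The plan is to mirror the proof of Corollary~\ref{cor:et}, reducing the claim to a direct application of Theorem~\ref{thm:r_uub} by identifying the closed-loop system with the bounded-input system~\eqref{eq:sys}. The essential difference from Corollary~\ref{cor:et} lies in how the input term is grouped: there one writes $BKe_t = (BK)e_t$ and treats $e_t$ as the bounded input, whereas here I would write $BKe_t = B(Ke_t)$ and treat $Ke_t$ as the bounded input. This is exactly the grouping dictated by the triggering function \eqref{eq:trig2}, which bounds $\|Ke_t\|$ rather than $\|e_t\|$, and it is what produces $\|B\|$ in the denominator of \eqref{eq:sig2} in place of $\|BK\|$.

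First I would recall that under $u_t = K\hat{x}_t$ the system \eqref{eq:sys2} becomes $x_{t+1} = (A+BK)x_t + B(Ke_t) + Ew_t$. Matching this against \eqref{eq:sys}, I set $A \mapsto A+BK$, $D \mapsto B$, and $v_t \mapsto Ke_t$, which is dimensionally consistent since $Ke_t \in \mathbb{R}^{n_u}$ and $B \in \mathbb{R}^{n_x\times n_u}$. The triggering rule \eqref{eq:trigger_time} with $\phi(x_t,\hat{x}_t) = \|Ke_t\|$ maintains $\|Ke_t\| \leq \sigma_2$ for all $t$, so the role of the bound $d$ in \eqref{eq:v} is played by $\sigma_2$. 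By the standing assumptions $\|A+BK\| < 1$ and $(A+BK,E)$ reachable, the hypotheses of Theorem~\ref{thm:r_uub} hold with $P$ the solution of \eqref{eq:lyap_et}.

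Next I would check that the sufficient condition \eqref{eq:r_uub_cond} of Theorem~\ref{thm:r_uub} reduces to \eqref{eq:sig2}. Under the substitutions, \eqref{eq:r_uub_cond} reads
\begin{align*}
r^2 > \left(\frac{\|B\|}{1-\|A+BK\|}\sigma_2 + \sqrt{\frac{1}{\varepsilon}\text{Tr}(P)}\right)^{\!2}.
\end{align*}
Since \eqref{eq:r_cond} gives $r > \sqrt{\frac{1}{\varepsilon}\text{Tr}(P)}$, both sides are nonnegative, so I may take square roots and rearrange to obtain $\sigma_2 < \frac{1-\|A+BK\|}{\|B\|}\bigl(r - \sqrt{\frac{1}{\varepsilon}\text{Tr}(P)}\bigr)$, which is precisely the bound \eqref{eq:sig2}. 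Theorem~\ref{thm:r_uub} then certifies that $\mathcal{D}$ in \eqref{eq:D_event} is a worst-case CVaR ultimate bound with bounded input for the closed-loop system, completing the argument.

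I do not expect a genuine obstacle, since the proof is a substitution into Theorem~\ref{thm:r_uub}; the only point requiring care is the boundary case of the non-strict inequality \eqref{eq:sig2}, where equality yields $r^2$ equal to, rather than strictly greater than, the right-hand side demanded by \eqref{eq:r_uub_cond}. This is handled exactly as in Corollary~\ref{cor:et}: condition \eqref{eq:r_cond} guarantees $r - \sqrt{\frac{1}{\varepsilon}\text{Tr}(P)} > 0$, so the admissible thresholds form a nonempty interval, and any $\sigma_2$ strictly below the stated bound already activates Theorem~\ref{thm:r_uub}. The main thing to get right remains the $B$-versus-$BK$ grouping that distinguishes this result from Corollary~\ref{cor:et}.
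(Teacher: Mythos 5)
Your proposal is correct and follows essentially the same route as the paper, whose proof is exactly the substitution $A \mapsto A+BK$, $D \mapsto B$, $v_t \mapsto Ke_t$, $d \mapsto \sigma_2$ into Theorem~\ref{thm:r_uub}; you merely spell out the algebra reducing \eqref{eq:r_uub_cond} to \eqref{eq:sig2}, which the paper leaves implicit. Your remark about the boundary case (equality in \eqref{eq:sig2} versus the strict inequality demanded by \eqref{eq:r_uub_cond}) identifies a minor looseness that is present in the paper itself, in both Corollary~\ref{cor:et} and Corollary~\ref{cor:et2}, so it does not distinguish your argument from the original.
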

\begin{proof}
Similarly to Corollary \ref{cor:et}, replace $A$, $D$, $v_t$ in \eqref{eq:sys}  and $d$ in \eqref{eq:v} by $A+BK$, $B$, $Ke_t$ in \eqref{eq:sys2} and $\sigma_2$ in \eqref{eq:trigger_time}, respectively, 
 the result follows from Theorem \ref{thm:r_uub}.
\end{proof}

\subsection{Positive Invariance}
Event-triggered control strategies that guarantee positive invariance are followed from Theorem \ref{thm:r_inv}.

\begin{cor}\label{cor:et3}
Suppose $r>0$ is chosen to satisfy
\begin{align}
r^2 >  \frac{1}{(1-\|A+BK\|)^2}\frac{1}{\varepsilon}\text{Tr}(\Sigma_wE^\top E).\label{eq:r_cond2}
\end{align}
Then, the use of the event-triggered condition 
\begin{align}
\phi(x_t, \hat{x}_t ) = \|\hat{x}_t -x_t\|= \|e_t\|> \sigma_3  \label{eq:trig3}
\end{align}
in \eqref{eq:trigger_time}
guarantees positive invariance with \eqref{eq:D_event}
if $\sigma_3$ satisfies
\begin{align}
\sigma_3\leq \frac{1}{\|BK\|} \left((1-\|A+BK\|) r- \sqrt{\frac{1}{\varepsilon}\text{Tr}(\Sigma_wE^\top E)}\right).\label{eq:sig3}
\end{align}
\end{cor}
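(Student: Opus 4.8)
The plan is to mirror the proofs of Corollaries \ref{cor:et} and \ref{cor:et2}, but to invoke Theorem \ref{thm:r_inv} in place of Theorem \ref{thm:r_uub}. Under the event-triggered law the closed-loop dynamics read $x_{t+1} = (A+BK)x_t + BKe_t + Ew_t$, which has exactly the form of the bounded-input system \eqref{eq:sys} after the substitution $A \to A+BK$, $D \to BK$, and $v_t \to e_t$. The bridge between the two settings is the error bound: the triggering rule \eqref{eq:trigger_time} with triggering function \eqref{eq:trig3} guarantees $\|e_t\| \leq \sigma_3$ for all $t \in \mathbb{Z}_{\geq 0}$, so that $e_t$ plays the role of a disturbance lying in the set $\mathcal{V}$ of \eqref{eq:v} with $d = \sigma_3$. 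Since $\|A+BK\| < 1$ and $(A+BK,E)$ is reachable by assumption, the hypotheses of Theorem \ref{thm:r_inv} hold for the substituted system.

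It then remains to check that the bound \eqref{eq:sig3} on $\sigma_3$ is precisely the invariance condition \eqref{eq:invariant_cond_alpha2} of Theorem \ref{thm:r_inv} rewritten for the substituted parameters. First I would rearrange \eqref{eq:sig3}: multiplying by $\|BK\| > 0$ and moving the square-root term to the left gives
\[
\|BK\|\sigma_3 + \sqrt{\frac{1}{\varepsilon}\text{Tr}(\Sigma_w E^\top E)} \leq (1-\|A+BK\|)\, r.
\]
Both sides are nonnegative — the left as a sum of nonnegative terms, and the right because $\|A+BK\| < 1$ and $r > 0$ — so squaring preserves the inequality, and dividing by $(1-\|A+BK\|)^2$ yields exactly \eqref{eq:invariant_cond_alpha2} with $D = BK$ and $d = \sigma_3$. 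Theorem \ref{thm:r_inv} then certifies that $\mathcal{D}$ of \eqref{eq:D_event} is a worst-case CVaR robust positively invariant set, which is the claimed positive invariance.

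Finally, I would confirm that condition \eqref{eq:r_cond2} makes the admissible range for $\sigma_3$ nonempty. The right-hand side of \eqref{eq:sig3} is strictly positive exactly when $(1-\|A+BK\|)\, r > \sqrt{\frac{1}{\varepsilon}\text{Tr}(\Sigma_w E^\top E)}$, which upon squaring is \eqref{eq:r_cond2}; hence a strictly positive $\sigma_3$ satisfying \eqref{eq:sig3} does exist. The only genuinely delicate point is the squaring step together with this existence check: one must verify the sign conditions before squaring, and both reduce to $\|A+BK\| < 1$ and \eqref{eq:r_cond2}. Beyond this, the argument is a direct specialization of Theorem \ref{thm:r_inv}, so I expect no substantive obstacle.
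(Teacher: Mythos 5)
Your proposal is correct and matches the paper's approach: the paper proves Corollary \ref{cor:et3} simply by invoking Theorem \ref{thm:r_inv} under the substitution $A \to A+BK$, $D \to BK$, $v_t \to e_t$, $d \to \sigma_3$, exactly as in Corollaries \ref{cor:et} and \ref{cor:et2}. Your version merely spells out the algebra the paper leaves implicit (the equivalence of \eqref{eq:sig3} with \eqref{eq:invariant_cond_alpha2} after squaring, with the sign checks, plus nonemptiness of the $\sigma_3$ range under \eqref{eq:r_cond2}), which is sound.
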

\begin{proof}
Follows from Theorem \ref{thm:r_inv}.
\end{proof}

A similar result can be obtained using the error threshold on the control input error:
\begin{cor}\label{cor:et4}
Suppose $r>0$ is chosen to satisfy the condition \eqref{eq:r_cond2}.
The use of the static event-triggered function 
\begin{align}
\phi(x_t, \hat{x}_t ) = \|K(\hat{x}_t -x_t)\| = \|Ke_t\|> \sigma_4 \|x_t\|
\end{align}
with
\begin{align}
\sigma_4\leq\frac{1}{\|B\|} \left((1-\|A+BK\|) r- \sqrt{\frac{1}{\varepsilon}\text{Tr}(\Sigma_wE^\top E)}\right).\label{eq:sig4}
\end{align}
in \eqref{eq:trigger_time} guarantees positive invariance with \eqref{eq:D_event}.
\end{cor}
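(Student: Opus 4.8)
The plan is to reduce Corollary \ref{cor:et4} to the robust positive invariance result Theorem \ref{thm:r_inv}, in exact parallel with Corollary \ref{cor:et3}. Writing the closed loop induced by $u_t=K\hat{x}_t$ as $x_{t+1}=(A+BK)x_t+B(Ke_t)+Ew_t$, I would match this to the bounded-input system \eqref{eq:sys} through the substitutions $A\mapsto A+BK$, $D\mapsto B$, and $v_t\mapsto Ke_t$. The standing design assumption $\|A+BK\|<1$ with $(A+BK,E)$ reachable supplies the structural hypotheses of Theorem \ref{thm:r_inv} under these substitutions, so the only remaining work is to express the triggering guarantee as a disturbance bound of the form $\|v_t\|\le d$ in \eqref{eq:v} and to read off the admissible threshold.

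The key steps are then as follows. First, observe that the mechanism \eqref{eq:trigger_time} with the triggering function of the corollary keeps $\|Ke_t\|$ below its threshold for all $t$, which furnishes the bound $d$ on the effective input $v_t=Ke_t$. Second, substitute this $d$, together with $D=B$, into the sufficient condition \eqref{eq:invariant_cond_alpha2} of Theorem \ref{thm:r_inv}, giving
\begin{align*}
r^2\ \ge\ \frac{1}{(1-\|A+BK\|)^2}\Big(\|B\|\,d+\sqrt{\tfrac{1}{\varepsilon}\text{Tr}(\Sigma_wE^\top E)}\Big)^2
\end{align*}
Third, take positive square roots and solve the resulting inequality, which is linear in $d$; the hypothesis \eqref{eq:r_cond2} is precisely what makes the bracket $(1-\|A+BK\|)r-\sqrt{\tfrac{1}{\varepsilon}\text{Tr}(\Sigma_wE^\top E)}$ strictly positive, guaranteeing a feasible positive threshold and yielding \eqref{eq:sig4}.

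The step I expect to be the main obstacle is the first one, namely turning the triggering guarantee into the constant bound $d$ that Theorem \ref{thm:r_inv} requires, because the stated threshold $\sigma_4\|x_t\|$ is state-dependent whereas \eqref{eq:v} demands a uniform bound. The natural way to handle this is to exploit that invariance is being established \emph{inside} $\mathcal{D}$: at every step one may assume $x_t\in\mathcal{D}$, i.e. $\|x_t\|\le r$, so the relative guarantee $\|Ke_t\|\le\sigma_4\|x_t\|$ collapses to $\|Ke_t\|\le\sigma_4 r$. Carrying $d=\sigma_4 r$ through the computation above, however, produces $\sigma_4\le\frac{1}{\|B\|r}\big((1-\|A+BK\|)r-\sqrt{\tfrac{1}{\varepsilon}\text{Tr}(\Sigma_wE^\top E)}\big)$, which differs from \eqref{eq:sig4} by a factor $1/r$; recovering \eqref{eq:sig4} exactly corresponds instead to reading the inter-event guarantee as the absolute bound $\|Ke_t\|\le\sigma_4$ (so that $d=\sigma_4$), in direct analogy with the ultimate-boundedness Corollary \ref{cor:et2}. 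I would therefore make this identification of $d$ explicit as the crux of the argument, after which the remaining algebra is the same routine square-root manipulation used in Corollaries \ref{cor:et2} and \ref{cor:et3}.
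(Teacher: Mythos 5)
Your reduction to Theorem \ref{thm:r_inv} via the substitutions $A\mapsto A+BK$, $D\mapsto B$, $v_t\mapsto Ke_t$, with $d$ set to the triggering threshold so that \eqref{eq:r_cond2} makes the bracket in \eqref{eq:sig4} positive, is exactly the paper's intended argument (its proof of this corollary is the one line ``Follows from Theorem \ref{thm:r_inv}''), so your proposal matches the paper's approach. Your diagnosis of the crux is also correct and worth stating: the bound \eqref{eq:sig4} is only consistent with the absolute trigger $\|Ke_t\|>\sigma_4$ identifying $d=\sigma_4$, in parallel with Corollary \ref{cor:et2}, whereas the state-dependent threshold $\sigma_4\|x_t\|$ as printed appears to be a typo --- it contradicts the paper's stated restriction to static triggers with constant threshold, it would introduce the extra $1/r$ factor you computed, and more fundamentally it cannot supply the uniform bound $d$ demanded by \eqref{eq:v}, since worst-case CVaR invariance does not guarantee $\|x_t\|\leq r$ pointwise along realizations.
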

\begin{proof}
Follows from Theorem \ref{thm:r_inv}.
\end{proof}

\subsection{Numerical Examples}
Here we observe the performances of the event-triggered controllers in the previous subsections using numerical examples.

Consider the system \eqref{eq:sys2} with 
\begin{align}\begin{aligned}
A &= \begin{bmatrix}1.2 & 0.3\\ 0&0.5\end{bmatrix}, \ 
B = \begin{bmatrix}1 \\ 0.5 \end{bmatrix}, \
E = \begin{bmatrix}1 & 2\\ 0.5&-0.5 \end{bmatrix},\\
x_0 &=  \begin{bmatrix}2 & 3\end{bmatrix}^\top
\end{aligned}\end{align}
subject to the zero-mean Gaussian disturbance with the covariance
\begin{align}
\Sigma_w = \begin{bmatrix}0.5 & 0\\ 0&0.25 \end{bmatrix},
\end{align}
and the state feedback gain 
\begin{align}
K =  \begin{bmatrix}-0.7 & -0.2 \end{bmatrix}.
\end{align}
Choose $\varepsilon = 0.3$ to compute the worst-case CVaR. 

To see the performances of the controllers with Corollaries \ref{cor:et} and \ref{cor:et2}, choose $r = 6> \sqrt{\frac{1}{\varepsilon}\text{Tr}(P)}=2.94$ and $\sigma_1 = 1.36$, $\sigma_2 = 0.99$ which satisfy \eqref{eq:sig1} and \eqref{eq:sig2} with equalities, respectively. 
With those parameters, the event-triggered control performances and control inputs as well as those of a periodic controller (i.e., standard state feedback controller that updates the control input all the time) are shown in Figure \ref{fig:uub}.
It is observed that the number of the control input updates were reduced to 27 and 25 during 60 time-steps, respectively, while $\|x_t\|^2$ is always smaller than $15 < 6^2=36$, thus 
$ \sup_{\mathbb{P}\in \mathcal{P}_t}\mathbb{P}\text{-CVaR}_{\varepsilon}[\|x_t\|^2-r^2]\leq0$ for sufficiently large $ t$ is achieved.

\begin{figure}[tb]\centering
\includegraphics[width=.98\linewidth, viewport =0 0 400 260, clip]{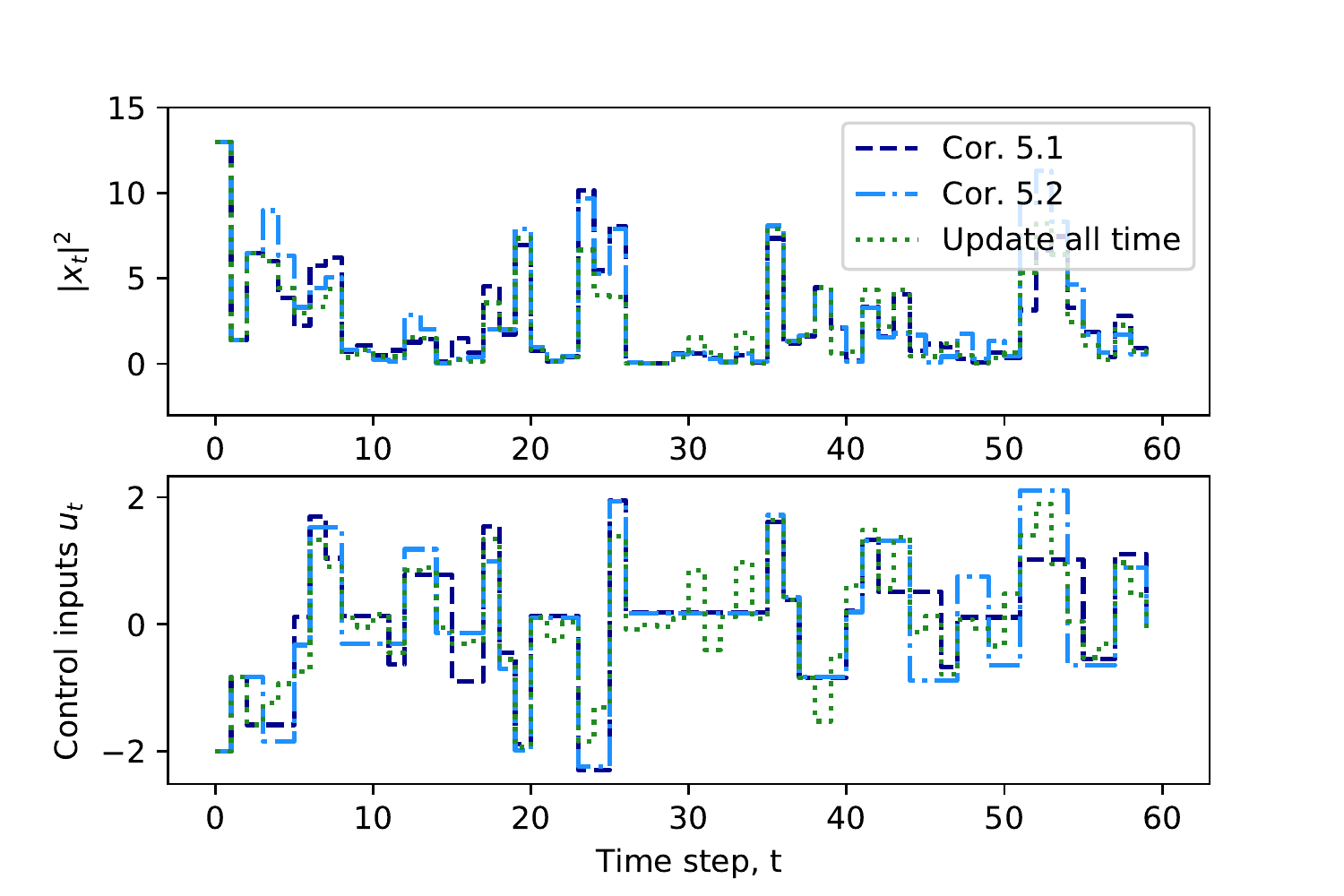}
\caption{Event-triggered controller performances and control inputs (ultimate boundedness)} 
\label{fig:uub}
\end{figure}

To see the performances of the controllers with Corollaries \ref{cor:et3} and \ref{cor:et4}, choose $r =10 > \sqrt{\frac{1}{(1-\|A+BK\|)^2}\frac{1}{\varepsilon}\text{Tr}(\Sigma_wE^\top E)}=6.54$ and $\sigma_3 = 1.52$, $\sigma_4 = 1.11$ which satisfy \eqref{eq:sig3} and \eqref{eq:sig4} with equalities, respectively. 
With those parameters, the event-triggered control performances and control inputs as well as those of a periodic controller are shown in Figure \ref{fig:pi}.
The number of control input updates were 24 and 23 during 60 time-steps, respectively.
\begin{figure}[tb]\centering
 \includegraphics[width=.98\linewidth, viewport =0 0 400 260, clip]{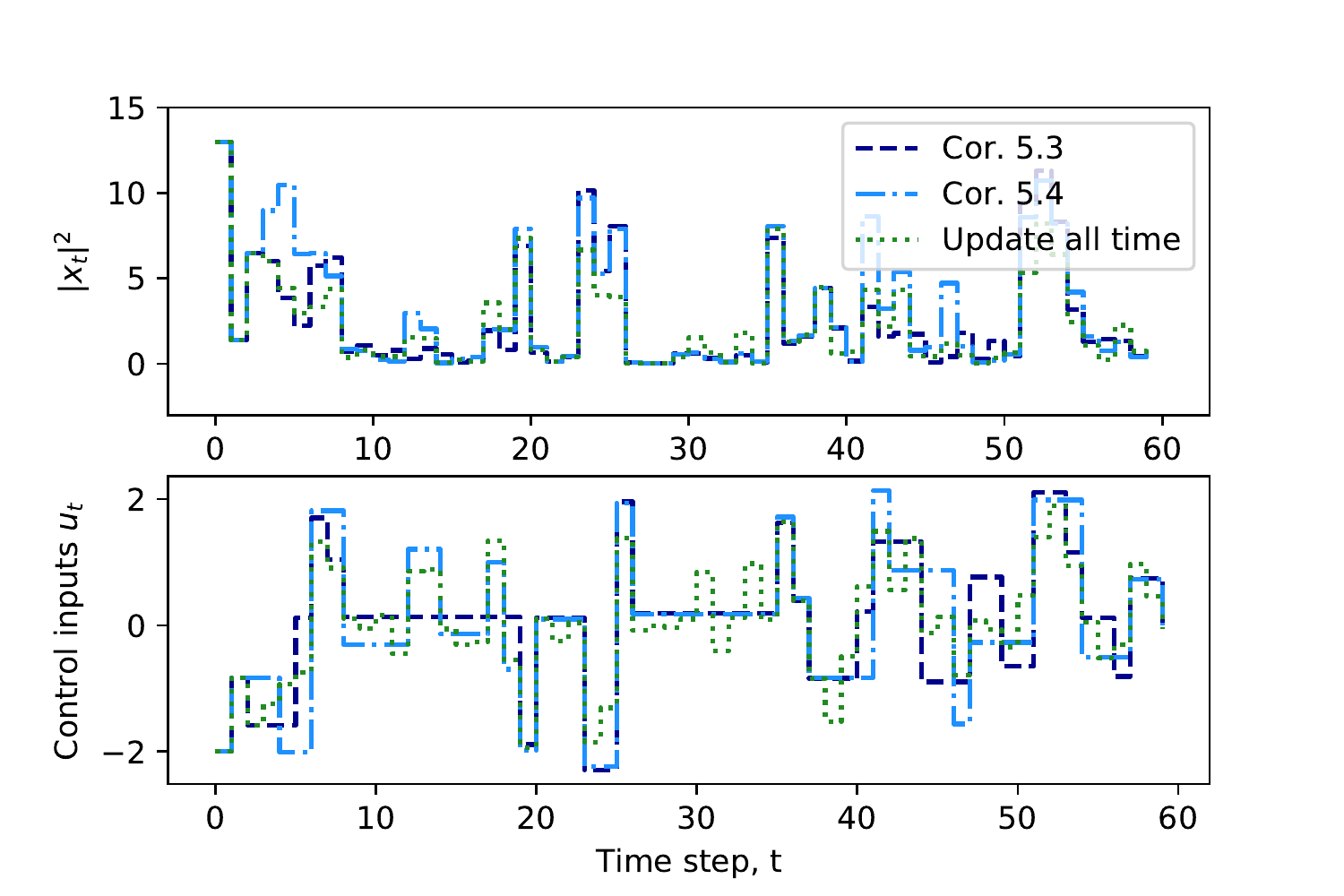}
\caption{Event-triggered controller performances and control inputs (positive invariance)} 
\label{fig:pi}
\end{figure}

In all cases, it is observed that the number of control inputs achieved a 50\% reduction while achieving the objectives. 
Those reductions are heavily dependent on the threshold $\sigma$. 
Large $r$ and $\varepsilon$, and a small $\|A+BK\|$ help to reduce the number of updates. 

\section{Conclusion}\label{sec:conc}
This paper introduced the novel concepts of stability, ultimate boundedness, and positive invariance for stochastic systems using the worst-case Conditional Value-at-Risk (CVaR) to quantify the tail behavior of the stochastic systems. These notions extend the stochastic correspondences and allow us to consider risk in the decision-making processes. 
The introduced notions are used to design event-triggered controllers and their performances were illustrated using numerical examples.


\bibliographystyle{IEEEtran}
\bibliography{IEEEabrv,myref}

\end{document}